\date{\today}
\newtheorem{theorem}{Theorem}[section]
\newtheorem{proposition}[theorem]{Proposition}
\newtheorem{corollary}[theorem]{Corollary}
\newtheorem{lemma}[theorem]{Lemma}
\theoremstyle{definition}
\newtheorem{example}[theorem]{Example}%[section]
\newtheorem{remark}[theorem]{Remark}%[section]
\newtheorem{definition}[theorem]{Definition}%[section]
\begin{document}

\title[On semigroups which admit only discrete left-continuous Hausdorff topology]{On semigroups which admit only discrete left-continuous Hausdorff topology}

\author{Oleg~Gutik}
\address{Ivan Franko National University of Lviv,
Universytetska 1, Lviv, 79000, Ukraine}
\email{oleg.gutik@lnu.edu.ua}

\keywords{Semitopological semigroup, topological semigroup, left topological semigroup, right topological semigroup, discrete topology, bicyclic monoid, compact, extended bicyclic semigroup, embedding}

\subjclass[2020]{22A15, 54C08, 54D10, 54D30, 54E52, 54H10}

\begin{abstract}
We give the sufficient condition when every left-continuous (right-continuous) Hausdorff topology on a semigroup $S$ is discrete. We construct a submonoid $\mathscr{C}_{+}(a,b)$ (resp., $\mathscr{C}_{-}(a,b)$) of the bicyclic monoid which contains a family $\{S_\alpha\colon \alpha\in\mathfrak{c}\}$ of continuum many subsemigroups with the following properties: $(i)$ every left-continuous (resp., right-continuous) Hausdorff topology on $S_\alpha$ is discrete; $(ii)$ every semigroup $S_\alpha$ admits a non-discrete right-continuous (resp., left-continuous) Hausdorff topology which is not left-continuous (resp., right-continuous); $(iii)$ every semigroup $S_\alpha$ isomorphically embeds into a Hausdorff compact topological semigroup. Also we construct a submonoid $\mathscr{C}_{\mathbb{Z}}^+$ (resp., $\mathscr{C}_{\mathbb{Z}}^-$) of the extended bicyclic semigroup which contains a family $\{S_\alpha\colon \alpha\in\mathfrak{c}\}$ of continuum many subsemigroups with the above described properties.
\end{abstract}

\maketitle

\section{\textbf{Introduction, motivation and main definitions}}\label{section-1}

In this paper we shall follow the terminology of \cite{Carruth-Hildebrant-Koch=1983, Clifford-Preston=1961,  Engelking=1989, Ruppert=1984}.

%\smallskip

By $\omega$ we denote the set of all non-negative integers and by $\mathbb{Z}$ the set of all integers. Throughout these notes we always assume that all topological spaces involved are Hausdorff -- unless explicitly stated otherwise.

\begin{definition}
Let $X$, $Y$ and $Z$ be topological spaces. A map $f\colon X\times Y\to Z$, $(x,y)\mapsto f(x,y)$, is called
\begin{itemize}
  \item[$(i)$] \emph{right} [\emph{left}] \emph{continuous} if it is continuous in the right [left] variable; i.e., for every fixed $x_0\in X$ [$y_0\in Y$] the map $Y\to Z$, $y\mapsto f(x_0,y)$ [$X\to Z$, $x\mapsto f(x,y_0)$] is continuous;
  \item[$(ii)$] \emph{separately continuous} if it is both left and right continuous;
  \item[$(iii)$] \emph{jointly continuous} if it is continuous as a map between the product space $X\times Y$ and the space $Z$.
\end{itemize}
\end{definition}

\begin{definition}[\cite{Carruth-Hildebrant-Koch=1983, Ruppert=1984}]
Let $S$ be a non-void topological space which is provided with an associative multiplication (a semigroup operation) $\mu\colon S\times S\to S$, $(x,y)\mapsto \mu(x,y)=xy$. Then the pair $(S,\mu)$ is called
\begin{itemize}
  \item[$(i)$] a \emph{right topological semigroup} if the map $\mu$ is right continuous, i.e., all interior left shifts $\lambda_s\colon S\to S$, $x\mapsto sx$, are continuous maps, $s\in S$;
  \item[$(ii)$] a \emph{left topological semigroup} if the map $\mu$ is left continuous, i.e., all interior right shifts $\rho_s\colon S\to S$, $x\mapsto xs$, are continuous maps, $s\in S$;
  \item[$(iii)$] a \emph{semitopological semigroup} if the map $\mu$ is separately continuous;
  \item[$(iv)$] a \emph{topological semigroup} if the map $\mu$ is jointly continuous.
\end{itemize}

We usually omit the reference to $\mu$ and write simply $S$ instead of $(S,\mu)$. It goes without saying that every topological semigroup is
also semitopological and every semitopological semigroup is both a right and left topological semigroup.
\end{definition}

A topology $\tau$ on a semigroup $S$ is called:
\begin{itemize}
  \item a \emph{semigroup} topology if $(S,\tau)$ is a topological semigroup;
  %\item an \emph{inverse semigroup} topology if $(S,\tau)$ is a topological inverse semigroup;
  \item a \emph{shift-continuous} topology if $(S,\tau)$ is a semitopological semigroup;
  \item an \emph{left-continuous} topology if $(S,\tau)$ is a left topological semigroup;
  \item an \emph{right-continuous} topology if $(S,\tau)$ is a right topological semigroup.
\end{itemize}

If $S$ is a semigroup then by $E(S)$ we denote the set of all idempotents of $S$.

The \emph{bicyclic monoid} ${\mathscr{C}}(a,b)$ is the semigroup with the identity $1$ generated by two elements $a$ and $b$ subjected only to the condition $ab=1$. The semigroup operation on ${\mathscr{C}}(a,b)$ is determined as
follows:
\begin{equation*}\label{eq-1}
    b^ka^l\cdot b^ma^n=
    \left\{
      \begin{array}{ll}
        b^{k-l+m}a^n, & \hbox{if~} l<m;\\
        b^ka^n,       & \hbox{if~} l=m;\\
        b^ka^{l-m+n}, & \hbox{if~} l>m.
      \end{array}
    \right.
\end{equation*}
It is well known that the bicyclic monoid ${\mathscr{C}}(a,b)$ is a bisimple (and hence simple) combinatorial $E$-unitary inverse semigroup and every non-trivial congruence on ${\mathscr{C}}(a,b)$ is a group congruence \cite{Clifford-Preston=1961}.

On $\mathbf{B}_\omega=\omega\times\omega$ we define the semigroup operation ``$\cdot$'' in the following way
\begin{equation}\label{eq-1.1}
  (k,l)\cdot(m,n)=
  \left\{
      \begin{array}{ll}
        (k-l+m,n), & \hbox{if~} l<m;\\
        (k,n),     & \hbox{if~} l=m;\\
        (k,l-m+n), & \hbox{if~} l>m,
      \end{array}
    \right.
\end{equation}
$k,l,m,n\in\omega$. It is obvious that the bicyclic monoid ${\mathscr{C}}(a,b)$ is isomorphic to the semigroup $\mathbf{B}_\omega$ by the mapping  $b^ia^j\mapsto (i,j)$. The set $\mathscr{C}_{\mathbb{Z}}=\mathbb{Z}\times\mathbb{Z}$ with the semigroup operation \eqref{eq-1.1} is called the \emph{extended bicyclic semigroup} \cite{Warne=1968}.

A semigroup $S$ is called \emph{inverse} if for any $s\in S$ there exists the unique $t\in S$ such that $sts=s$ and $tst=t$. In this case the element $t$ is called \emph{inverse of} $s$ and it is denoted by $s^{-1}$. Every inverse semigroup $S$ admits the \emph{natural partial order}:
\begin{equation*}
  s\preccurlyeq t \qquad \hbox{if and only if there exists an idempotent } \quad e\in S \quad \hbox{such that} \quad s=et.
\end{equation*}
It is well known that the bicyclic monoid and the extended bicyclic semigroup are inverse semigroup. Also the natural partial order $\preccurlyeq$ on $\mathscr{C}_{\mathbb{Z}}$ and $\mathbf{B}_\omega$ is determined in the following way
\begin{equation*}
  (k,l)\preccurlyeq(m,n) \Longleftrightarrow (k-l=m-n \;\hbox{and} \; m\leqslant k) \Longleftrightarrow (k-l=m-n \;\hbox{and} \; n\leqslant l).
\end{equation*}
%\smallskip

For a semigroups $S$ and $T$ a map $\alpha\colon S\to S$ is said to be an \emph{anti-homomorphism} if $\alpha(s\cdot t)=\alpha(t)\cdot \alpha(s)$. A bijective anti-homomorphism of semigroups is called an \emph{anti-isomorphism}.

%\smallskip

It is well known that topological algebra studies the influence of topological properties of its objects on their algebraic properties and the influence of algebraic properties of its objects on their topological properties. There are two main problems in topological algebra: the problem of non-discrete topologization and the problem of embedding into objects with some topological-algebraic properties.

%\smallskip

In mathematical literature the question about non-discrete (Hausdorff) topologization was posed by Markov in \cite{Markov=1945}. Pontryagin gave well known conditions on a base at the unity of a group for its non-discrete topologization (see Theorem~4.5 of \cite{Hewitt-Roos=1963}). Various authors have refined Markov's question: \emph{can a given infinite group $G$ endowed with a non-discrete group topology be embedded into a compact topological group?} Again, for an arbitrary Abelian group $G$ the answer is affirmative, but there is a non-Abelian topological group that cannot be embedded into any compact
topological group ({see Section~9 of \cite{HBSTT-1984}}).

%\smallskip

Also, Ol'shanskiy \cite{Olshansky=1980} constructed an infinite countable group $G$ such that every Hausdorff group topology on $G$ is discrete. In \cite{Klyachko-Olshanskii-Osin=2013} the authors constructed first examples of infinite hereditarily non-topologizable groups.
In \cite{Zelenyuk=2000} Ye.~Zelenyuk proved that on every countable group there exists a nondiscrete regular topology with continuous shifts and inversion. Also, every countable locally finite group admits a non-discrete Hausdorff topology \cite{Belyaev=1995}.
Taimanov presented in \cite{Taimanov=1973} a commutative semigroup $\mathfrak{T}$ which admits only discrete Hausdorff semigroup topology. Also in \cite{Taimanov=1975} he gave sufficient conditions on a commutative semigroup to have a non-discrete semigroup topology. In \cite{Gutik=2016} it is proved that each shift-continuous  $T_1$-topology on the Taimanov semigroup $\mathfrak{T}$ is discrete.
The bicyclic monoid admits only the discrete shift-continuous Hausdorff topology \cite{Eberhart-Selden=1969, Bertman-West=1976}. In \cite{Chornenka-Gutik=2023} non-discrete shift-continuous (semigroup, semigroup inverse) $T_1$-topologies on the bicyclic monoid ${\mathscr{C}}(a,b)$ are constructed. In \cite{Chornenka-Gutik=2023} are presented the sufficient conditions under which a shift-continuous $T_1$-topology on ${\mathscr{C}}(a,b)$ is discrete. Also every shift-continuous Hausdorff topology on the extended bicyclic semigroup and any interassociate of the bicyclic monoid is discrete \cite{Fihel-Gutik=2011, Gutik-Maksymyk=2016}.

%\smallskip

Stable and $\Gamma$-compact topological semigroups do not contain the bicyclic monoid~\cite{Anderson-Hunter-Koch=1965, Hildebrant-Koch=1986, Koch-Wallace=1957}. The problem of embedding the bicyclic monoid into compact-like topological semigroups was studied in \cite{Banakh-Dimitrova-Gutik=2009, Banakh-Dimitrova-Gutik=2010, Bardyla-Ravsky=2020, Gutik-Repovs=2007}.

%\smallskip

%Subsemigroups of the bicyclic monoid are studied in \cite{Descalco-Ruskuc=2005, Descalco-Ruskuc=2008, Hovsepyan=2020}.

%\smallskip

In \cite{Makanjuola-Umar=1997}  the following anti-isomorphic subsemigroups of the bicyclic monoid
\begin{equation*}
  \mathscr{C}_{+}(a,b)=\left\{b^ia^j\in\mathscr{C}(a,b)\colon i\leqslant j\right\}
  \quad \hbox{and} \quad
  \mathscr{C}_{-}(a,b)=\left\{b^ia^j\in\mathscr{C}(a,b)\colon i\geqslant j\right\}
\end{equation*}
are studied. In the paper \cite{Gutik=2023} we prove that every Hausdorff left-continuous (right-continuous) topology on the monoid $\mathscr{C}_{+}(a,b)$  ($\mathscr{C}_{-}(a,b)$) is discrete and show that there exists a compact Hausdorff topological monoid $S$ which contains $\mathscr{C}_{+}(a,b)$  ($\mathscr{C}_{-}(a,b)$) as a submonoid. Also, in \cite{Gutik=2023} we constructed a non-discrete right-continuous (left-continuous) topology $\tau_p^+$ ($\tau_p^-$) on the semigroup $\mathscr{C}_{+}(a,b)$ ($\mathscr{C}_{-}(a,b)$) which is not left-continuous (right-continuous).

Similarly as in \cite{Makanjuola-Umar=1997} the following two
\begin{equation*}
  \mathscr{C}^{+}_\mathbb{Z}=\left\{(i,j)\in\mathscr{C}_\mathbb{Z}\colon i\leqslant j\right\}
  \quad \hbox{and} \quad
  \mathscr{C}^{-}_\mathbb{Z}=\left\{(i,j)\in\mathscr{C}_\mathbb{Z}\colon i\geqslant j\right\}
\end{equation*}
of $\mathscr{C}_\mathbb{Z}$. The semigroup operation of $\mathscr{C}_\mathbb{Z}$ implies that $\mathscr{C}^{+}_\mathbb{Z}$ and $\mathscr{C}^{-}_\mathbb{Z}$ are anti-isomorphic subsemigroups of $\mathscr{C}_\mathbb{Z}$.

%\smallskip

In the paper we give the sufficient condition when every left-continuous (right-continuous) Hausdorff topology on a semigroup $S$ is discrete. We construct a submonoid $\mathscr{C}_{+}(a,b)$ (resp., $\mathscr{C}_{-}(a,b)$) of the bicyclic monoid which contains a family $\{S_\alpha\colon \alpha\in\mathfrak{c}\}$ of continuum many subsemigroups with the following properties: $(i)$ every left-continuous (resp., right-continuous) Hausdorff topology on $S_\alpha$ is discrete; $(ii)$ every semigroup $S_\alpha$ admits a non-discrete right-continuous (resp., left-continuous) Hausdorff topology which is not left-continuous (resp., right-continuous); $(iii)$ every semigroup $S_\alpha$ isomorphically embeds into a Hausdorff compact topological semigroup. Also we construct a submonoid $\mathscr{C}_{\mathbb{Z}}^+$ (resp., $\mathscr{C}_{\mathbb{Z}}^-$) of the extended bicyclic semigroup which contains a family $\{S_\alpha\colon \alpha\in\mathfrak{c}\}$ of continuum many subsemigroups with the above described properties.

%\smallskip

\section{On topologizations of subsubsemigroups of  the semigroups $\mathscr{C}_{+}(a,b)$ and $\mathscr{C}_{-}(a,b)$}

The following example shows that the bicyclic monoid ${\mathscr{C}}(a,b)$ admits a Hausdorff right-continuous (left-continuous) topology $\tau_p^r$ ($\tau_p^l$) such that left (right) translations are not continuous in $(\mathscr{C}(a,b),\tau_p^r)$ ($(\mathscr{C}(a,b),\tau_p^l)$).

\begin{example}\label{example-2.1}
Fix an arbitrary prime positive integer $p$.  For any  $b^ia^j\in{\mathscr{C}}(a,b)$ and any $n\in\omega$ we denote
\begin{equation*}
U_n^r(b^ia^j)=\left\{b^ia^{j+p^n\cdot k}\colon k\in \omega\right\}.
\end{equation*}
Then the family $\mathscr{B}_p^r=\left\{\mathscr{B}_p^r(b^ia^j)\colon  i,j\in\omega\right\}$, where $\mathscr{B}_p^r(b^ia^j)=\left\{U_n^r(b^ia^j)\colon n\in\omega\right\}$ for $b^ia^j\in{\mathscr{C}}(a,b)$, satisfies the properties (BP1)--(BP4) of \cite{Engelking=1989}, and hence it generates a Hausdorff topology $\tau_p^r$ on the bicyclic monoid $\mathscr{C}(a,b)$. Obviously that $\tau_p^r$ is a non-discrete topology $\mathscr{C}(a,b)$, and moreover all points of $(\mathscr{C}(a,b),\tau_p^r)$ are non-isolated.

We claim that $\tau_p^r$ is a right-continuous topology on $\mathscr{C}(a,b)$. Indeed, for any $b^{i_1}a^{j_1},b^{i_2}a^{j_2}\in\mathscr{C}(a,b)$ and any $n\in\omega$ we have that
\begin{align*}
  b^{i_1}a^{j_1}\cdot U_n^r(b^{i_2}a^{j_2})&=\left\{b^{i_1}a^{j_1}\cdot b^{i_2}a^{j_2+p^n\cdot k}\colon k\in\omega\right\}= \\
   &=
   \left\{
     \begin{array}{ll}
       b^{i_1-j_1+i_2}a^{j_2+p^n\cdot k}, & \hbox{if~} j_1<i_2;\\
       b^{i_1}a^{j_2+p^n\cdot k},         & \hbox{if~} j_1=i_2;\\
       b^{i_1}a^{j_1-i_2+j_2+p^n\cdot k}, & \hbox{if~} j_1>i_2
     \end{array}
   \right.=\\
    &=
   \left\{
     \begin{array}{ll}
       U_n^r(b^{i_1-j_1+i_2}a^{j_2}), & \hbox{if~} j_1<i_2;\\
       U_n^r(b^{i_1}a^{j_2}),         & \hbox{if~} j_1=i_2;\\
       U_n^r(b^{i_1}a^{j_1-i_2+j_2}), & \hbox{if~} j_1>i_2,
     \end{array}
   \right.
\end{align*}
and hence the semigroup operation is right-continuous in $(\mathscr{C}(a,b),\tau_p^r)$.

Fix arbitrary  $b^{i_1}a^{j_1},b^{i_2}a^{j_2}\in\mathscr{C}(a,b)$ such that ${j_1}<{i_2}$. Then we have that $b^{i_1}a^{j_1}\cdot b^{i_2}a^{j_2}=b^{i_1-j_1+i_2}a^{j_2}$. For any $n\in\omega$ there exists $k_0$ such that the neighbourhood $U_n^r(b^{i_1}a^{j_1})$ contains the elements $b^{i_1}a^{j_1+p^n\cdot k}$ with the property that $j_1+p^n\cdot k>i_2$ for all $k>k_0$. This implies that $b^{i_1}a^{j_1+p^n\cdot k}\cdot b^{i_2}a^{j_2}=b^{i_1}a^{j_1+p^n\cdot k-i_2+j_2}$ and hence we get that
\begin{equation*}
  U_n^r(b^{i_1}a^{j_1}) \cdot b^{i_2}a^{j_2}\nsubseteq U_m^r(b^{i_1-j_1+i_2}a^{j_2})
\end{equation*}
for any $n,m\in\omega$. Thus the semigroup operation is not left-continuous in $(\mathscr{C}(a,b),\tau_p^r)$.

The anti-isomorphism of the bicyclic monoid $\mathfrak{AI}\colon \mathscr{C}(a,b)\to\mathscr{C}(a,b)$, $b^ia^j\mapsto b^ja^i$, $i,j\in\omega$, generated a left-continuous topology on $\mathscr{C}(a,b)$ in the following way. For any  $b^ia^j\in{\mathscr{C}}(a,b)$ and any $n\in\omega$ we denote
\begin{equation*}
U_n^l(b^ia^j)=(U_n^r(b^ja^i))\mathfrak{AI}
\end{equation*}
Then the family $\mathscr{B}_p^l=\left\{\mathscr{B}_p^r(b^ia^j)\colon b^ia^j\in{\mathscr{C}}(a,b), n\in\omega\right\}$, where $\mathscr{B}_p^l(b^ia^j)=\left\{U_n^r(b^ia^j)\colon n\in\omega\right\}$, satisfies the properties (BP1)--(BP4) of \cite{Engelking=1989}, and hence it generates a non-discrete Hausdorff topology $\tau_p^l$ on the bicyclic monoid $\mathscr{C}(a,b)$. Since $\mathfrak{AI}\colon \mathscr{C}(a,b)\to\mathscr{C}(a,b)$ is the anti-isomorphism, the semigroup operation is left-continuous in $(\mathscr{C}(a,b),\tau_p^r)$ but it is not right-continuous.
\end{example}

The following theorem gives the sufficient conditions on a semigroup $S$ under which every Hausdorff left-contionuous (right-continuous) topology on $S$ is discrete.

\begin{theorem}\label{theorem-2.1}
Let $S$ be an infinite semigroup. If for any $s\in S$ there exists an idempotent $e_s\in S$ such that $s\in S\setminus Se_s$ $(s\in S\setminus e_sS)$ and the set $S\setminus Se_s$ $(S\setminus e_sS)$ is finite, then every Hausdorff left-contionuous (right-continuous) topology on $S$ is discrete.
\end{theorem}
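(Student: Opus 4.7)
The plan is to establish the left-continuous version directly; the right-continuous version follows by a mirror argument using left shifts $\lambda_{e_s}\colon x\mapsto e_s x$ and the cofinite set $e_sS$ in place of $Se_s$. It suffices to show that every $s\in S$ is isolated in an arbitrary Hausdorff left-continuous topology $\tau$ on $S$.

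Fix $s\in S$ and let $e_s$ be the idempotent given by the hypothesis. The key algebraic observation is that the right shift $\rho_{e_s}\colon x\mapsto xe_s$ (which is continuous since $\tau$ is left-continuous) acts as the identity on the set $Se_s$, because for $x=ye_s\in Se_s$ one has $\rho_{e_s}(x)=ye_s^2=ye_s=x$. In particular, since $s\notin Se_s$ but $s':=se_s=\rho_{e_s}(s)\in Se_s$, the points $s$ and $s'$ are distinct. Using Hausdorffness I would pick disjoint open sets $U\ni s$ and $V\ni s'$; then $W:=U\cap\rho_{e_s}^{-1}(V)$ is an open neighborhood of $s$.

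Now if some point $x\in W$ happened to lie in $Se_s$, then the identity action of $\rho_{e_s}$ on $Se_s$ would give $x=\rho_{e_s}(x)\in V$ while also $x\in U$, contradicting $U\cap V=\varnothing$. Hence $W\subseteq S\setminus Se_s$, and by the cofiniteness hypothesis $W$ is a finite open neighborhood of $s$. Since $\tau$ is Hausdorff, singletons are closed, so the finite set $W\setminus\{s\}$ is closed, whence $\{s\}=W\setminus(W\setminus\{s\})$ is open. Thus $s$ is isolated, and since $s$ was arbitrary, $\tau$ is discrete.

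I anticipate no real obstacle: the whole argument rests on the single slick observation that $\rho_{e_s}$ restricts to the identity on the cofinite set $Se_s$, after which Hausdorff separation together with continuity of $\rho_{e_s}$ forces any sufficiently small neighborhood of $s$ to avoid $Se_s$ and hence be finite. The right-continuous statement is recovered verbatim with $\lambda_{e_s}$ in place of $\rho_{e_s}$ and $e_sS$ in place of $Se_s$.
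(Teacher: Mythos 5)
Your argument is correct and follows essentially the same route as the paper: both rest on the observation that the right shift $\rho_{e_s}$ is a continuous retraction of $S$ onto $Se_s$, so that $Se_s$ is closed (the paper cites Exercise~1.5.C of Engelking for this; you simply inline its proof via the Hausdorff separation of $s$ from $se_s$), whence $s$ lies in the finite open set $S\setminus Se_s$ and is therefore isolated. No gaps.
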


\begin{proof}
Fix an arbitrary $s\in S$.
Obviously that for the idempotent $e_s$ of the semigroup $S$ the right translation $\rho_{e_s}\colon S\to S$, $s\mapsto se_s$ is a continuous retraction of the topological space $S$. Hausdorffness of $S$ and Exercise 1.5.C of \cite{Engelking=1989} imply that $Se_s$ is a closed subspace of $S$. Since the set $S\setminus Se_s$ is finite and open, the point $s$ is isolated in $S$.

The proof of the dual statement is similar.
\end{proof}

Theorem~\ref{theorem-2.1} implies the following corollary.

\begin{corollary}\label{corollary-2.2}
Let  $S$ be a subsemigroup of $\mathscr{C}_{+}(a,b)$ $(\mathscr{C}_{-}(a,b))$ such that the set $E(S)$ is infinite. then every Hausdorff left-contionuous (right-continuous) topology on $S$ is discrete.
\end{corollary}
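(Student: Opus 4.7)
The plan is to derive the corollary as an immediate application of Theorem~\ref{theorem-2.1}. The idempotents of the bicyclic monoid all lie in $\mathscr{C}_{+}(a,b)\cap\mathscr{C}_{-}(a,b)$ and have the form $b^ka^k$ for $k\in\omega$, so the hypothesis that $E(S)$ is infinite supplies us with arbitrarily large $k$ such that $b^ka^k\in S$. Given any $s=b^ia^j\in S\subseteq\mathscr{C}_{+}(a,b)$, I would fix such an idempotent $e_s:=b^ka^k$ with $k>j$ and then verify the two conditions of Theorem~\ref{theorem-2.1} in the left-continuous case.

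The key computation is the right shift by $e_s$: for an arbitrary $b^pa^q\in\mathscr{C}_{+}(a,b)$ (so $p\leq q$), formula \eqref{eq-1.1} yields
\begin{equation*}
b^pa^q\cdot b^ka^k=
\begin{cases}
b^{p-q+k}a^k, & \hbox{if } q<k,\\
b^pa^q, & \hbox{if } q\geq k.
\end{cases}
\end{equation*}
Consequently every element of $S$ whose second coordinate is at least $k$ lies in $Se_s$, so $S\setminus Se_s\subseteq\{b^pa^q\in\mathscr{C}_{+}(a,b):q<k\}$, which is a finite set. The element $s=b^ia^j$ with $j<k$ itself cannot lie in $Se_s$: any element of $Se_s$ either retains its original second coordinate (which would have to be $\geq k$) or acquires second coordinate exactly $k\neq j$. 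Both hypotheses of Theorem~\ref{theorem-2.1} are therefore met, and every Hausdorff left-continuous topology on $S$ is discrete.

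For subsemigroups of $\mathscr{C}_{-}(a,b)$, the argument is entirely symmetric under the anti-isomorphism $b^ia^j\mapsto b^ja^i$: given $s=b^ia^j\in S$ one chooses $e_s=b^ka^k\in E(S)$ with $k>i$ and computes $e_s\cdot b^pa^q$ instead, obtaining $S\setminus e_sS\subseteq\{b^pa^q\in\mathscr{C}_{-}(a,b):p<k\}$, which is finite, together with $s\notin e_sS$. The only subtle point throughout the proof is that the chosen idempotent must belong to $S$ itself, not merely to the ambient monoid, and this is precisely the role of the hypothesis that $E(S)$ is infinite rather than merely nonempty. Everything else is a routine verification of the multiplication formula.
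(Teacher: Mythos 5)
Your proof is correct and follows essentially the same route as the paper: for $s=b^ia^j\in S$ one picks an idempotent $b^ka^k\in E(S)$ with $k>j$ (possible since $E(S)$ is infinite), computes the right translation by it to see that $S\setminus Sb^ka^k$ is finite and omits $s$, and then invokes Theorem~\ref{theorem-2.1}, with the dual argument for $\mathscr{C}_{-}(a,b)$. No gaps; your explicit remark that the idempotent must be chosen inside $S$ is exactly the point the hypothesis on $E(S)$ is there for.
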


\begin{proof}
Fix an arbitrary $b^{i}a^{i+k}\in S$, $i,k\in\omega$. Since the set $E(S)$ is infinite, there exists a positive integer $p$ such that $p>k$ and $b^{i+p}a^{i+p}\in E(S)$. The semigroup operation of $\mathscr{C}_{+}(a,b)$ (and hence of $S$) implies that for any $b^sa^t\in S$ we have that
\begin{equation*}
  b^sa^t\cdot b^{i+p}a^{i+p}=
  \left\{
    \begin{array}{ll}
      b^{s-t+i+p}a^{i+p}, & \hbox{if~} t<i+p;\\
      b^{s}a^{i+p},       & \hbox{if~} t=i+p; \\
      b^sa^t,             & \hbox{if~} t>i+p.
    \end{array}
  \right.
\end{equation*}
By the above equalities we get that $b^{i}a^{i+k}\notin S\cdot b^{i+p}a^{i+p}$, and moreover the set $\mathscr{C}_{+}(a,b)\setminus (\mathscr{C}_{+}(a,b)\cdot b^{i+p}a^{i+p})$ is finite. This implies that $S\setminus S\cdot b^{i+p}a^{i+p}$ is finite, as well. Next we apply Theorem~\ref{theorem-2.1}.

In the case of semigroup $\mathscr{C}_{-}(a,b)$ the proof is similar.
\end{proof}

The following example and later statements show that the semigroups $\mathscr{C}_{+}(a,b)$ and $\mathscr{C}_{-}(a,b)$ contains continuum many subsemigroups which satisfy the assumptions of Theorem~\ref{theorem-2.1}.

\begin{example}\label{example-2.3}
For an arbitrary non-negative integer $k$ we define
\begin{equation*}
  \mathscr{C}_{+k}(a,b)=\left\{b^{i}a^{i+s}\in\mathscr{C}_{+}(a,b)\colon s\geqslant k, s\in \omega\right\}.
\end{equation*}
The semigroup operation of $\mathscr{C}_{+}(a,b)$ implies that $\mathscr{C}_{+k}(a,b)$ is a subsemigroup of $\mathscr{C}_{+}(a,b)$. Fix an arbitrary
infinite subset $X$ of $\omega$. Latter we shall assume that $X=\left\{x_i\colon i\in\omega\right\}$ where $\left\{x_i\right\}_{i\in\omega}$ is a
steadily increasing sequence in $\omega$. Put
\begin{equation*}
  \mathscr{C}_{+k}^{X}(a,b)=\mathscr{C}_{+k}(a,b)\cup\left\{b^{x_i}a^{x_i}\in\mathscr{C}_{+}(a,b)\colon i\in\omega \right\}.
\end{equation*}
Since
\begin{equation*}
  b^ja^{j+s}\cdot b^{x_i}a^{x_i}=
\left\{
  \begin{array}{ll}
    b^{x_i-s}a^{x_i}, & \hbox{if~} j+s<x_i;\\
    b^ja^{j+s},       & \hbox{if~} j+s\geqslant x_i
  \end{array}
\right.
\end{equation*}
and
\begin{equation*}
  b^{x_i}a^{x_i}\cdot  b^ja^{j+s}=
\left\{
  \begin{array}{ll}
    b^ja^{j+s},       & \hbox{if~} x_i<j;\\
    b^{x_i}a^{x_i+s}, & \hbox{if~} x_i\geqslant j
  \end{array}
\right.
\end{equation*}
for all $i,j,s\in \omega$, the inequality $s\geqslant k$ implies that $\mathscr{C}_{+k}^{X}(a,b)$ is a subsemigroup of $\mathscr{C}_{+}(a,b)$. Obviously that the semigroup $\mathscr{C}_{+k}^{X}(a,b)$ contains infinitely many idempotents. Hence the conditions of Corollary~\ref{corollary-2.2} hold for the semigroup $\mathscr{C}_{+k}^{X}(a,b)$.
\end{example}

\begin{lemma}\label{lemma-2.4}
For an arbitrary infinite subset $X$  of $\omega$ the semilattice $(X,\max)$ is isomorphic to the semilattice $(\omega,\max)$.
\end{lemma}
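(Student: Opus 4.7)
The plan is to exploit the fact that in a totally ordered set the operation $\max$ is determined by the order, so an order isomorphism between two totally ordered sets is automatically a semilattice isomorphism with respect to $\max$. Thus it suffices to produce an order isomorphism $(X,\leqslant)\to(\omega,\leqslant)$.

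First, enumerate $X$ as a strictly increasing sequence $X=\{x_i:i\in\omega\}$ with $x_0<x_1<x_2<\cdots$; this is possible because $X$ is an infinite subset of the well-ordered set $\omega$, so the recursive definition $x_0=\min X$ and $x_{i+1}=\min(X\setminus\{x_0,\ldots,x_i\})$ gives such an enumeration, and by induction every element of $X$ appears in it. Define
\begin{equation*}
\varphi\colon X\to\omega,\qquad \varphi(x_i)=i.
\end{equation*}

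Next, verify that $\varphi$ is a bijection (immediate from the enumeration) and an order isomorphism, i.e.\ $x_i\leqslant x_j$ if and only if $i\leqslant j$, which follows from the strict monotonicity of the sequence $\{x_i\}_{i\in\omega}$. Finally, for any $x_i,x_j\in X$ we have $\max\{x_i,x_j\}=x_{\max\{i,j\}}$, hence
\begin{equation*}
\varphi(\max\{x_i,x_j\})=\varphi(x_{\max\{i,j\}})=\max\{i,j\}=\max\{\varphi(x_i),\varphi(x_j)\},
\end{equation*}
so $\varphi$ is the desired semilattice isomorphism.

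There is essentially no obstacle here; the only thing to be a little careful about is the initial enumeration, which uses the well-ordering of $\omega$. Everything else is bookkeeping around the tautology that for linearly ordered sets the max-semilattice structure is encoded entirely by the order.
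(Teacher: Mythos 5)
Your proof is correct and follows essentially the same route as the paper: enumerate $X$ as a strictly increasing sequence, observe that the resulting bijection with $\omega$ is an order isomorphism, and conclude that it preserves $\max$. The only cosmetic difference is that you spell out the recursive construction of the enumeration and the identity $\max\{x_i,x_j\}=x_{\max\{i,j\}}$, which the paper leaves implicit.
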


\begin{proof}
Since $(\omega,\max)$ is a linearly ordered semilattice, $(X,\max)$ is a semilattice, too.

Without loss of generality we may assume that $X=\left\{x_i\colon i\in\omega\right\}$ where $\left\{x_i\right\}_{i\in\omega}$ is a steadily increasing sequence in $\omega$. We define the map $\iota_X\colon \omega\to X$ by the formula $(i)\iota_X=x_i$, $i\in\omega$. Since the sets $\omega$ and $X$ are well ordered by the usual linear order $\leqslant$, the map $\iota_X\colon \omega\to X$ is well defined. Obviously that $\iota_X\colon (\omega,\max)\to (X,\max)$ is a semilattice isomorphism. Indeed, since $\left\{x_i\right\}_{i\in\omega}$ is a steadily increasing sequence in $\omega$, the inequality $i\leqslant j$ implies $x_i=(i)\iota_X\leqslant (j)\iota_X=x_j$. It is easy that the map $\iota_X\colon \omega\to X$ is bijective and its inverse is steadily increasing.
\end{proof}

\begin{lemma}\label{lemma-2.5}
Let $\left\{x_i\right\}_{i\in\omega}$ and $\left\{y_i\right\}_{i\in\omega}$ be steadily increasing sequences in $\omega$. Then the sets $X=\left\{x_i\colon i\in\omega\right\}$ and $Y=\left\{y_i\colon i\in\omega\right\}$ with the induced semilattice operation from $(\omega,\max)$ are isomorphic semilattices, and moreover the isomorphism $\iota\colon (X,\max)\to(Y\max)$ is defined by the formula $(x_i)\iota=y_i$, $i\in\omega$.
\end{lemma}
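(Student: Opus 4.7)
The plan is to deduce this lemma almost immediately from Lemma~\ref{lemma-2.4} by composing the two canonical isomorphisms it supplies, rather than re-proving monotonicity and bijectivity from scratch.

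First, I would invoke Lemma~\ref{lemma-2.4} twice: applied to $X$ it gives a semilattice isomorphism $\iota_X\colon(\omega,\max)\to(X,\max)$ with $(i)\iota_X=x_i$, and applied to $Y$ it gives a semilattice isomorphism $\iota_Y\colon(\omega,\max)\to(Y,\max)$ with $(i)\iota_Y=y_i$. Both maps are well defined because $\{x_i\}$ and $\{y_i\}$ are strictly increasing enumerations, so the index $i$ is uniquely determined by $x_i$ and by $y_i$.

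Next, I would define $\iota\colon(X,\max)\to(Y,\max)$ as the composition $\iota=\iota_X^{-1}\circ\iota_Y$ (written in the paper's right-action notation as $(x)\iota=((x)\iota_X^{-1})\iota_Y$). Then for every $i\in\omega$ we have $(x_i)\iota=(i)\iota_Y=y_i$, which is exactly the formula in the statement. As a composition of two semilattice isomorphisms, $\iota$ is automatically a semilattice isomorphism between $(X,\max)$ and $(Y,\max)$.

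There is essentially no obstacle here; the lemma is a direct corollary of Lemma~\ref{lemma-2.4}. The only minor point worth checking explicitly is that the induced semilattice structure on $X$ and $Y$ from $(\omega,\max)$ coincides with the abstract semilattice structure on the image, but this is immediate since $X$ and $Y$ are closed under $\max$ as subsets of the linearly ordered semilattice $(\omega,\max)$, so the restriction of $\max$ to $X$ and to $Y$ is a genuine semilattice operation and Lemma~\ref{lemma-2.4} applies verbatim.
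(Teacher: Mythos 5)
Your proof is correct and is essentially identical to the paper's own argument: both obtain $\iota$ as the composition $(x)\iota=((x)\iota_X^{-1})\iota_Y$ of the two isomorphisms supplied by Lemma~\ref{lemma-2.4}, yielding $(x_i)\iota=y_i$. Your added remark that $X$ and $Y$ are closed under $\max$ is a harmless extra check the paper leaves implicit.
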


\begin{proof}
By the proof of Lemma~\ref{lemma-2.4} the semilattice isomorphism $\iota_X\colon (\omega,\max)\to (X,\max)$ is defined by the formula $(i)\iota_X=x_i$, $i\in\omega$. These arguments imply that the isomorphism $\iota\colon (X,\max)\to(Y,\max)$ is determined in the following way $(x_i)\iota=((x_i)\iota_X^{-1})\iota_Y=y_i$, $i\in\omega$. Since the isomorphisms $\iota_X$ and $\iota_Y$ are well defined, so is $\iota$.
\end{proof}

\begin{lemma}\label{lemma-2.6}
Let $\left\{x_i\right\}_{i\in\omega}$ and $\left\{y_i\right\}_{i\in\omega}$ be steadily increasing sequences in $\omega$ and let $X=\left\{x_i\colon i\in\omega\right\}$ and $Y=\left\{y_i\colon i\in\omega\right\}$. Then the subsemigroups $\mathscr{C}_{+1}^{X}(a,b)$ and $\mathscr{C}_{+1}^{Y}(a,b)$ of the monoid $\mathscr{C}_{+}(a,b)$ are isomorphic if and only if $x_i=y_i$ for all $i\in\omega$.
\end{lemma}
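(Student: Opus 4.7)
The sufficiency is immediate: if $x_i=y_i$ for all $i\in\omega$ then $X=Y$ and hence $\mathscr{C}_{+1}^{X}(a,b)=\mathscr{C}_{+1}^{Y}(a,b)$.

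For the necessity, suppose $\Phi\colon \mathscr{C}_{+1}^{X}(a,b)\to \mathscr{C}_{+1}^{Y}(a,b)$ is a semigroup isomorphism. The plan is to recover each $x_i$ as a semigroup-theoretic invariant attached to the idempotent $b^{x_i}a^{x_i}$. First I would argue that $\Phi$ restricts to an isomorphism of idempotent semilattices. The set of idempotents of $\mathscr{C}_{+1}^{X}(a,b)$ is $\{b^{x_i}a^{x_i}\colon i\in\omega\}$, and under $b^{x_i}a^{x_i}\cdot b^{x_j}a^{x_j}=b^{\max(x_i,x_j)}a^{\max(x_i,x_j)}$ this semilattice is naturally identified with $(X,\max)$ via $b^{x_i}a^{x_i}\leftrightarrow x_i$; applying Lemma~\ref{lemma-2.5} to the resulting isomorphism $(X,\max)\to(Y,\max)$ forces $\Phi(b^{x_i}a^{x_i})=b^{y_i}a^{y_i}$ for every $i\in\omega$.

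Next I would show that $\Phi$ preserves the \emph{diagonal height} of a non-idempotent element $b^ja^{j+k}$, defined as $k\geqslant 1$. The key observation is that two non-idempotents $s,t\in\mathscr{C}_{+1}^{X}(a,b)$ share a common $\preccurlyeq$-lower bound if and only if they have the same diagonal height: if both have height $k$, then $b^{x_m}a^{x_m+k}=b^{x_m}a^{x_m}\cdot s=b^{x_m}a^{x_m}\cdot t$ is a common lower bound for all sufficiently large $m$; conversely, left multiplication by an idempotent preserves diagonal height. Since this relation is definable from the semigroup operation, $\Phi$ respects the partition of non-idempotents into height classes, inducing a permutation $\pi$ of the set of positive integers. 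The identity $\operatorname{height}(st)=\operatorname{height}(s)+\operatorname{height}(t)$ forces $\pi$ to be additive; as an additive bijection on the positive integers is necessarily the identity, $\Phi$ must map height-$1$ non-idempotents to height-$1$ non-idempotents.

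Finally I would recover $x_i$ intrinsically. A direct calculation shows that for $s=b^ja^{j+1}$ one has $b^{x_i}a^{x_i}\cdot s=s$ precisely when $j\geqslant x_i$; hence the number of height-$1$ non-idempotents $s\in\mathscr{C}_{+1}^{X}(a,b)$ with $b^{x_i}a^{x_i}\cdot s\neq s$ is exactly $x_i$. This count is a semigroup invariant of the pair $(b^{x_i}a^{x_i},\mathscr{C}_{+1}^{X}(a,b))$, so applying the same characterization on the $\mathscr{C}_{+1}^{Y}(a,b)$ side with $\Phi(b^{x_i}a^{x_i})=b^{y_i}a^{y_i}$ yields $x_i=y_i$ for every $i\in\omega$. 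The main obstacle is the preservation of diagonal height, since it rests on both the order-theoretic characterization of ``same height'' and the additivity step to pin down $\pi$; once this is in place, the rest is a short computation.
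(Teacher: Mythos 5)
Your proof is correct, and it follows the same overall strategy as the paper: first pin down the images of the idempotents via Lemma~\ref{lemma-2.5}, so that $\Phi(b^{x_i}a^{x_i})=b^{y_i}a^{y_i}$, and then recover $x_i$ by counting elements of ``height one'' attached to that idempotent. Where you diverge is in how the counting step is made isomorphism-invariant. The paper counts the solution set of the single equation $b^{x_i}a^{x_i}\cdot z=b^{x_i}a^{x_i+1}$ (which has exactly $x_i+1$ solutions $b^ja^{j+1}$, $j\leqslant x_i$) and transfers the count across $\mathfrak{I}$; this tacitly requires knowing that $\mathfrak{I}$ sends $b^{x_i}a^{x_i+1}$ to $b^{y_i}a^{y_i+1}$, a point the paper does not spell out. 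You instead prove once and for all that the diagonal height is an isomorphism invariant --- via the relation ``there is an idempotent $e$ with $es=et$'', which holds for non-idempotents exactly when they have equal height, together with the additivity of height under multiplication forcing the induced permutation of heights to be the identity --- and then count the height-one elements $s$ with $b^{x_i}a^{x_i}\cdot s\neq s$ (exactly $x_i$ of them). Your route is longer but more self-contained, and it closes the gap just mentioned. One small point to tighten: since $\mathscr{C}_{+1}^{X}(a,b)$ is not an inverse semigroup, the natural partial order $\preccurlyeq$ of the ambient bicyclic monoid is not a priori intrinsic to it, so you should phrase the ``same height'' criterion directly as the existence of an idempotent $e$ with $es=et$ (which is what your displayed witnesses actually establish) rather than as the existence of a common $\preccurlyeq$-lower bound.
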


\begin{proof}
The implication $(\Leftarrow)$ is trivial.

$(\Rightarrow)$ Suppose that the semigroups $\mathscr{C}_{+1}^{X}(a,b)$ and $\mathscr{C}_{+1}^{Y}(a,b)$ are isomorphic. First we observe that the semigroup operation  of $\mathscr{C}_{+}(a,b)$ implies that for any $i\in\omega$ the set of solution of the equality $b^ia^i\cdot z=b^ia^{i+1}$ in $\mathscr{C}_{+}(a,b)$ is the following
\begin{equation*}
  \mathscr{S}_{b^ia^{i+1}}^{b^ia^{i}}=\left\{a,\ldots, b^ia^{i+1}\right\},
\end{equation*}
i.e., the cardinality of the set $\mathscr{S}_{b^ia^{i+1}}^{b^ia^{i}}$ is equal to $i+1$. This implies that the same holds for solutions of the equality $b^{x_i}a^{x_i}\cdot z=b^{x_i}a^{x_i+1}$ in the semigroup $\mathscr{C}_{+1}^{X}(a,b)$ and for solutions of the equality $b^{y_i}a^{y_i}\cdot z=b^{y_i}a^{y_i+1}$ in the semigroup $\mathscr{C}_{+1}^{X}(a,b)$.

If $\mathfrak{I}\colon \mathscr{C}_{+1}^{X}(a,b)\to \mathscr{C}_{+1}^{X}(a,b)$ is an isomorphism, then by Lemma~\ref{lemma-2.5} we get that $(b^{x_i}a^{x_i})\mathfrak{I}=b^{y_i}a^{y_i}$ for any $i\in\omega$. This and above arguments imply that the set of solutions the equality $b^{x_i}a^{x_i}\cdot z=b^{x_i}a^{x_i+1}$ in  $\mathscr{C}_{+1}^{X}(a,b)$ and the set of solutions of the equality $b^{y_i}a^{y_i}\cdot z=b^{y_i}a^{y_i+1}$ in  $\mathscr{C}_{+1}^{X}(a,b)$ have same cardinality $x_i+1=y_i+1$, and hence they coincide. This implies that $x_i=y_i$ for all $i\in\omega$.
\end{proof}

It is well known that the set $\omega$ contains continuum many distinct infinite subsets. This and lemma~\ref{lemma-2.6} imply that there exist continuum many non-isomotrphic subsemigroups in the monoid $\mathscr{C}_{+}(a,b)$ of the forms $\mathscr{C}_{+k}^{X}(a,b)$, where $k\in\omega\setminus\{0\}$ and $X$ is an infinite subset of $\omega$. Then using Theorem~\ref{theorem-2.1} we obtain the following theorem.

\begin{theorem}\label{theorem-2.7}
The monoid $\mathscr{C}_{+}(a,b)$ contains continuum many non-isomorphic subsemigroups of the forms $\mathscr{C}_{+k}^{X}(a,b)$, where $k$ is a positive integer and $X$ is an infinite subset of $\omega$, such that every left-continuous Hausdorff topology on $\mathscr{C}_{+k}^{X}(a,b)$ is discrete.
\end{theorem}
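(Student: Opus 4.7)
The plan is to combine the three preceding results. Example~\ref{example-2.3} exhibits, for each positive integer $k$ and each infinite subset $X=\{x_i:i\in\omega\}$ of $\omega$ (enumerated in strictly increasing order), the subsemigroup $\mathscr{C}_{+k}^{X}(a,b)$ of $\mathscr{C}_{+}(a,b)$, and verifies explicitly that it contains the infinite idempotent set $\{b^{x_i}a^{x_i}:i\in\omega\}$. Corollary~\ref{corollary-2.2} therefore applies and guarantees that every Hausdorff left-continuous topology on $\mathscr{C}_{+k}^{X}(a,b)$ is discrete. So the only remaining task is to select $\mathfrak{c}$ pairwise non-isomorphic members of this list.

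To this end I would fix $k=1$, which is the value covered by Lemma~\ref{lemma-2.6}. The family of infinite subsets of $\omega$ has cardinality $\mathfrak{c}$, and Lemma~\ref{lemma-2.6} tells us that whenever $X\neq Y$ are two such subsets, the enumerating sequences $\{x_i\}_{i\in\omega}$ and $\{y_i\}_{i\in\omega}$ differ in some coordinate, so the semigroups $\mathscr{C}_{+1}^{X}(a,b)$ and $\mathscr{C}_{+1}^{Y}(a,b)$ cannot be isomorphic. Hence $\{\mathscr{C}_{+1}^{X}(a,b):X\subseteq\omega\text{ infinite}\}$ already furnishes $\mathfrak{c}$ pairwise non-isomorphic subsemigroups of the required form, and each of them admits only the discrete Hausdorff left-continuous topology by the paragraph above.

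Since every ingredient is already in place, namely the structural computations in Example~\ref{example-2.3}, the topological reduction in Corollary~\ref{corollary-2.2}, and the non-isomorphism criterion in Lemma~\ref{lemma-2.6}, there is no real obstacle; the proof amounts to a bookkeeping assembly. The only point worth noting is that Lemma~\ref{lemma-2.6} is stated only for $k=1$, which is why I would restrict to that single value of $k$ rather than also varying $k$ to obtain the continuum.
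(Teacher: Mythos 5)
Your proposal is correct and follows essentially the same route as the paper: the non-isomorphism of the continuum family comes from Lemma~\ref{lemma-2.6} together with the fact that $\omega$ has continuum many infinite subsets (hence, as you note, one restricts to $k=1$), and discreteness comes from the hypotheses of Theorem~\ref{theorem-2.1}, which the paper re-verifies directly for $\mathscr{C}_{+k}^{X}(a,b)$ while you invoke Corollary~\ref{corollary-2.2} -- itself just that verification packaged via the infinitude of $E\bigl(\mathscr{C}_{+k}^{X}(a,b)\bigr)$. No gap.
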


\begin{proof}
Let $X=\left\{x_i\colon i\in\omega\right\}$ be any infinite subset of $\omega$ such that $\left\{x_i\right\}_{i\in\omega}$ is a steadily increasing sequence in $\omega$. For any element $b^ja^{j+s}\in \mathscr{C}_{+k}^{X}(a,b)$ there exists $x_i\in X$ such that $x_i\geqslant j+s+1$. The we have that
\begin{equation*}
  b^ja^{j+s}\notin \mathscr{C}_{+k}^{X}(a,b)\cdot b^{x_i}a^{x_i}\subset \mathscr{C}_{+}(a,b)\cdot b^{x_i}a^{x_i}
\end{equation*}
and the set
\begin{equation*}
  \mathscr{C}_{+k}^{X}(a,b)\setminus\left(\mathscr{C}_{+k}^{X}(a,b)\cdot b^{x_i}a^{x_i}\right)\subset \mathscr{C}_{+}(a,b)\setminus\left(\mathscr{C}_{+}(a,b)\cdot b^{x_i}a^{x_i}\right)
\end{equation*}
is infinite. Next we apply Theorem~\ref{theorem-2.1}.
\end{proof}

By the dual way as in Example~\ref{example-2.3} for an arbitrary infinite subset $X$ of $\omega$ and any positive integer $k$ we construct the subsemigroup $\mathscr{C}_{-k}^{X}(a,b)$ of the monoid $\mathscr{C}_{-}^{X}(a,b)$. We observe that the monoids $\mathscr{C}_{+}(a,b)$ and $\mathscr{C}_{-}(a,b)$ are anti-isomorphic by the mapping $\mathfrak{AI}\colon b^ia^j\mapsto b^ja^i$, $i,j\in\omega$, $i\leqslant j$ (see \cite{Gutik=2023, Makanjuola-Umar=1997}). Simple verifications show that the restriction of $\mathfrak{AI}$ onto $\mathscr{C}_{+k}^{X}(a,b)$ is an anti-isomorphism from $\mathscr{C}_{+k}^{X}(a,b)$ to $\mathscr{C}_{-k}^{X}(a,b)$ for an arbitrary infinite subset $X$ of $\omega$ and any positive integer $k$. The above arguments and Theorem~\ref{theorem-2.7} imply

\begin{theorem}\label{theorem-2.8}
The monoid $\mathscr{C}_{-}(a,b)$ contains continuum many non-isomorphic subsemigroups of the forms $\mathscr{C}_{-k}^{X}(a,b)$, where $k$ is a positive integer and $X$ is an infinite subset of $\omega$, such that every right-continuous Hausdorff topology on $\mathscr{C}_{+k}^{X}(a,b)$ is discrete.
\end{theorem}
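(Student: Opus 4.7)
The plan is to deduce Theorem~\ref{theorem-2.8} from Theorem~\ref{theorem-2.7} by transporting everything along the anti-isomorphism $\mathfrak{AI}\colon \mathscr{C}_{+}(a,b)\to \mathscr{C}_{-}(a,b)$, $b^{i}a^{j}\mapsto b^{j}a^{i}$. As noted just before the statement, for each positive integer $k$ and each infinite $X\subset\omega$, the restriction of $\mathfrak{AI}$ yields an anti-isomorphism $\mathscr{C}_{+k}^{X}(a,b)\to \mathscr{C}_{-k}^{X}(a,b)$. So the family $\{\mathscr{C}_{-k}^{X}(a,b)\}$ with $k$ a positive integer and $X$ an infinite subset of $\omega$ is already indexed by the same continuum-sized parameter set as in Theorem~\ref{theorem-2.7}, and what remains is to verify (a) that it contains continuum many pairwise non-isomorphic members, and (b) that every Hausdorff right-continuous topology on each such semigroup is discrete.

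For (a), I would use the standard fact that the composition of two anti-isomorphisms is an isomorphism. Indeed, if some $\varphi\colon \mathscr{C}_{-k}^{X}(a,b)\to \mathscr{C}_{-k}^{Y}(a,b)$ were an isomorphism, then $\mathfrak{AI}^{-1}\circ\varphi\circ\mathfrak{AI}\colon \mathscr{C}_{+k}^{X}(a,b)\to \mathscr{C}_{+k}^{Y}(a,b)$ would be an isomorphism, contradicting Lemma~\ref{lemma-2.6} whenever $X\neq Y$. Since by Theorem~\ref{theorem-2.7} already the $\mathscr{C}_{+k}^{X}(a,b)$ form continuum many isomorphism classes, the same is true on the $\mathscr{C}_{-}$-side.

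For (b), let $\tau$ be any Hausdorff right-continuous topology on $\mathscr{C}_{-k}^{X}(a,b)$ and let $\tau^{\ast}$ be its pull-back via $\mathfrak{AI}$, so that $\mathfrak{AI}\colon (\mathscr{C}_{+k}^{X}(a,b),\tau^{\ast})\to (\mathscr{C}_{-k}^{X}(a,b),\tau)$ is a homeomorphism and $\tau^{\ast}$ is Hausdorff. The defining identity $\mathfrak{AI}(xy)=\mathfrak{AI}(y)\mathfrak{AI}(x)$ gives, for every $s\in \mathscr{C}_{+k}^{X}(a,b)$, the shift-swap
\begin{equation*}
\mathfrak{AI}\circ\lambda_{s}=\rho_{\mathfrak{AI}(s)}\circ\mathfrak{AI}.
\end{equation*}
Because each right shift $\rho_{\mathfrak{AI}(s)}$ is $\tau$-continuous and $\mathfrak{AI}$ is a homeomorphism, each left shift $\lambda_{s}$ is $\tau^{\ast}$-continuous, so $\tau^{\ast}$ is a Hausdorff left-continuous topology on $\mathscr{C}_{+k}^{X}(a,b)$. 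By Theorem~\ref{theorem-2.7}, $\tau^{\ast}$ is discrete; pushing forward along $\mathfrak{AI}$, $\tau$ is discrete as well.

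I do not expect a substantive obstacle: the calculation behind the shift-swap identity is a one-line use of the anti-homomorphism axiom, and the isomorphism-transfer argument in (a) is purely formal. The only thing to be mildly careful about is bookkeeping the direction (right versus left) when pulling back topologies, since an anti-isomorphism, unlike an isomorphism, turns right-continuity on one side into left-continuity on the other; but this is exactly what makes Theorem~\ref{theorem-2.7} applicable.
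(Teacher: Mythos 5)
Your proposal is correct and is essentially the paper's own argument: the paper likewise just restricts the anti-isomorphism $\mathfrak{AI}\colon b^ia^j\mapsto b^ja^i$ to $\mathscr{C}_{+k}^{X}(a,b)$ and invokes Theorem~\ref{theorem-2.7}, so you have in fact supplied more detail (the shift-swap identity and the transfer of non-isomorphism) than the paper does. One small bookkeeping caveat: under the paper's conventions a right-continuous topology is one with continuous \emph{left} shifts $\lambda_s$, so the identity you actually need is $\mathfrak{AI}\circ\rho_{s}=\lambda_{\mathfrak{AI}(s)}\circ\mathfrak{AI}$ rather than its mirror; this is a relabeling, not a mathematical gap.
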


We observe that for any positive integer $k$  and any infinite subset $X$ of $\omega$ we have that for every $b^ja^{j}\in \mathscr{C}_{+k}^{X}(a,b)$ there exists its basic neighbourhood $U_n^r(b^ia^j)$ in $(\mathscr{C}(a,b),\tau_p^l)$ such that $U_n^r(b^ia^j)\subset \mathscr{C}_{+k}^{X}(a,b)$, and moreover $U_m^r(b^ia^j)\subset \mathscr{C}_{+k}^{X}(a,b)$ for any positive integer $m>n$. This implies that the topology of the space $(\mathscr{C}(a,b),\tau_p^l)$ induces on $\mathscr{C}_{+k}^{X}(a,b)$ a non-discrete topology. Later we denote this topology on $\mathscr{C}_{+k}^{X}(a,b)$ by $\tau_p^l$. Then the arguments presented in Example~\ref{example-2.1} imply the following proposition.

\begin{proposition}\label{proposition-2.9}
The monoid $\mathscr{C}_{+}(a,b)$ contains continuum many non-isomorphic subsemigroups of the forms $\mathscr{C}_{+k}^{X}(a,b)$, where $k$ is a positive integer and $X$ is an infinite subset of $\omega$, and the semigroup operation is left-continuous and is not right-continuous in $(\mathscr{C}_{+k}^{X}(a,b),\tau_p^l)$.
\end{proposition}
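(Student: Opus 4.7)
The plan is to split the claim into three independent pieces. The cardinality statement -- continuum many pairwise non-isomorphic subsemigroups of the form $\mathscr{C}_{+k}^{X}(a,b)$ -- is already contained in Theorem~\ref{theorem-2.7}, which itself rests on Lemma~\ref{lemma-2.6} together with the observation that $\omega$ has $\mathfrak{c}$ many infinite subsets indexed by steadily increasing sequences. I would simply cite it and concentrate on the topological content, which is the genuinely new ingredient.

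For the topological part I would first confirm that the topology inherited from the ambient space restricts to a well-defined non-discrete topology on every $\mathscr{C}_{+k}^{X}(a,b)$. This is a short case analysis separating the two flavors of points in the subsemigroup. At a proper element $b^ja^{j+s}\in\mathscr{C}_{+k}(a,b)$ (so $s\geq k$), every element of the basic neighborhood $U_n^r(b^ja^{j+s})=\{b^ja^{j+s+p^n t}\colon t\in\omega\}$ keeps the first coordinate $j$ and has gap $s+p^n t\geq k$, so it stays in $\mathscr{C}_{+k}(a,b)$. At an adjoined idempotent $b^{x_i}a^{x_i}$ one chooses $n$ large enough that $p^n\geq k$, so that every non-trivial element of $U_n^r(b^{x_i}a^{x_i})=\{b^{x_i}a^{x_i+p^n t}\colon t\in\omega\}$ has gap $p^n t\geq k$ and hence also lies in $\mathscr{C}_{+k}^{X}(a,b)$. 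This both certifies non-discreteness and shows that the multiplication identity $b^{i_1}a^{j_1}\cdot U_n^r(b^{i_2}a^{j_2})=U_n^r(b^{i_1}a^{j_1}\cdot b^{i_2}a^{j_2})$ established in Example~\ref{example-2.1} restricts verbatim to the subsemigroup, so the corresponding one-sided continuity of the multiplication is automatically inherited.

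For the failure of the opposite-sided continuity I would transport the obstruction from Example~\ref{example-2.1} into $\mathscr{C}_{+k}^{X}(a,b)$ via explicit witnesses. The pair $a^k$ and $b^{k+1}a^{2k+1}$ both lie in $\mathscr{C}_{+k}(a,b)$ and satisfy $j_1=k<k+1=i_2$, the critical inequality from Example~\ref{example-2.1}. Their product is $b^1a^{2k+1}$, but for large $t$ the element $a^{k+p^n t}\in U_n^r(a^k)$ is sent by the right shift $\rho_{b^{k+1}a^{2k+1}}$ to $a^{2k+p^n t}$, whose first coordinate $0$ is incompatible with every basic neighborhood $U_m^r(b^1a^{2k+1})$ of the product (each of which has first coordinate $1$). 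This kills continuity of that right shift and rules out the opposite-sided continuity. The main obstacle I anticipate is purely notational bookkeeping: tracking the two flavors of points in $\mathscr{C}_{+k}^{X}(a,b)$ and verifying that the witnessing pair can be chosen inside the subsemigroup uniformly in the parameters. The explicit pair above works for every $k\geq 1$ and every infinite $X\subset\omega$, so the argument goes through uniformly.
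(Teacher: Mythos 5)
Your proof is correct and follows essentially the same route as the paper: the paper's own argument for Proposition~\ref{proposition-2.9} consists precisely of the observation (in the paragraph preceding the statement) that the basic neighbourhoods $U_n^r(\cdot)$ of points of $\mathscr{C}_{+k}^{X}(a,b)$ are, for $n$ large enough, contained in $\mathscr{C}_{+k}^{X}(a,b)$, so that the induced topology is non-discrete, followed by an appeal to ``the arguments presented in Example~\ref{example-2.1}''. You are more explicit than the paper in two useful places: the remark that at an adjoined idempotent $b^{x_i}a^{x_i}$ one must take $n$ with $p^n\geqslant k$ before the whole basic neighbourhood lies in the subsemigroup, and the concrete witnessing pair $a^k$, $b^{k+1}a^{2k+1}$ (together with their images under the relevant right shift) showing that the discontinuity found in Example~\ref{example-2.1} survives restriction to $\mathscr{C}_{+k}^{X}(a,b)$ uniformly in $k$ and $X$; the paper leaves both points to the reader.

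One thing you should not leave as ``the corresponding one-sided continuity'' versus ``the opposite-sided continuity'': what your computation (and the paper's) actually establishes is that all left shifts $\lambda_s\colon x\mapsto sx$ are continuous while the right shift $\rho_{b^{k+1}a^{2k+1}}$ is not, which in the paper's own terminology means the induced topology is \emph{right}-continuous and \emph{not left}-continuous. This is the only conclusion compatible with Theorem~\ref{theorem-2.7} (a non-discrete left-continuous Hausdorff topology on $\mathscr{C}_{+k}^{X}(a,b)$ cannot exist) and with item $(ii)$ of the abstract; the literal wording of Proposition~\ref{proposition-2.9} has the two sides interchanged, just as the preceding paragraph labels the topology generated by the $U_n^r$-neighbourhoods as $\tau_p^l$. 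Your mathematics proves the intended (corrected) statement; when writing it up, state the final labelling explicitly rather than by reference to ``the corresponding side'', and note the discrepancy with the printed statement so the reader is not left to reconcile it.
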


Also, by the dual way we get the following

\begin{proposition}\label{proposition-2.10}
The monoid $\mathscr{C}_{-}(a,b)$ contains continuum many non-isomorphic subsemigroups of the forms $\mathscr{C}_{-k}^{X}(a,b)$, where $k$ is a positive integer and $X$ is an infinite subset of $\omega$, and the semigroup operation is right-continuous and is not left-continuous in $(\mathscr{C}_{-k}^{X}(a,b),\tau_p^r)$.
\end{proposition}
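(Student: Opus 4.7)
My plan is to derive Proposition~\ref{proposition-2.10} from Proposition~\ref{proposition-2.9} via the anti-isomorphism $\mathfrak{AI}\colon\mathscr{C}(a,b)\to\mathscr{C}(a,b)$, $b^ia^j\mapsto b^ja^i$, in exactly the manner that Theorem~\ref{theorem-2.8} was obtained from Theorem~\ref{theorem-2.7}. The key observation, already noted before Theorem~\ref{theorem-2.8}, is that $\mathfrak{AI}$ restricts to an anti-isomorphism $\mathfrak{AI}\colon\mathscr{C}_{+k}^{X}(a,b)\to\mathscr{C}_{-k}^{X}(a,b)$ for any positive integer $k$ and any infinite $X\subseteq\omega$.

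First I would combine this restriction with Lemma~\ref{lemma-2.6} to conclude that the family $\bigl\{\mathscr{C}_{-k}^{X}(a,b)\bigr\}$, as $X$ ranges over the continuum many infinite subsets of $\omega$, consists of pairwise non-isomorphic semigroups: any hypothetical isomorphism $\mathscr{C}_{-k}^{X}(a,b)\to\mathscr{C}_{-k}^{Y}(a,b)$ would, via pre- and post-composition with $\mathfrak{AI}$, yield an isomorphism $\mathscr{C}_{+k}^{X}(a,b)\to\mathscr{C}_{+k}^{Y}(a,b)$, which by Lemma~\ref{lemma-2.6} forces $X=Y$.

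Next I would define $\tau_p^r$ on $\mathscr{C}_{-k}^{X}(a,b)$ as the pushforward of $\tau_p^l$ on $\mathscr{C}_{+k}^{X}(a,b)$ under $\mathfrak{AI}$; equivalently, its basic neighbourhoods are the images under $\mathfrak{AI}$ of the $U_n^l$-neighbourhoods used in Proposition~\ref{proposition-2.9}, and a direct computation shows these agree with the natural $\tau_p^r$-type neighbourhoods on the ``minus'' side obtained from Example~\ref{example-2.1} by duality. By construction $\mathfrak{AI}\colon(\mathscr{C}_{+k}^{X}(a,b),\tau_p^l)\to(\mathscr{C}_{-k}^{X}(a,b),\tau_p^r)$ is then a homeomorphism, so non-discreteness transfers automatically from Proposition~\ref{proposition-2.9}.

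Finally, the continuity half follows from the general principle that any anti-isomorphism $\alpha\colon S\to T$ satisfies $\lambda_s^S=\alpha^{-1}\circ\rho_{\alpha(s)}^T\circ\alpha$ and $\rho_s^S=\alpha^{-1}\circ\lambda_{\alpha(s)}^T\circ\alpha$, so under a homeomorphic anti-isomorphism continuity of left shifts on one side corresponds to continuity of right shifts on the other. Applied to $\alpha=\mathfrak{AI}|_{\mathscr{C}_{+k}^{X}(a,b)}$: continuity of every right shift in $(\mathscr{C}_{+k}^{X}(a,b),\tau_p^l)$---the left-continuity guaranteed by Proposition~\ref{proposition-2.9}---becomes continuity of every left shift in $(\mathscr{C}_{-k}^{X}(a,b),\tau_p^r)$, i.e., right-continuity; and the failure of continuity of some left shift on the plus side becomes failure of continuity of some right shift on the minus side, i.e., failure of left-continuity. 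The only mildly delicate step is the verification that $\mathfrak{AI}$ carries each basic $\tau_p^l$-neighbourhood precisely to a basic $\tau_p^r$-neighbourhood, which reduces to a direct computation on $U_n^l(b^ia^j)$ analogous to those in Example~\ref{example-2.1} and presents no genuine obstacle.
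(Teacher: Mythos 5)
Your proposal is correct and is exactly the route the paper intends: the paper offers no explicit proof of Proposition~\ref{proposition-2.10} beyond the phrase ``by the dual way,'' which means precisely the transfer of Proposition~\ref{proposition-2.9} through the anti-isomorphism $\mathfrak{AI}\colon \mathscr{C}_{+k}^{X}(a,b)\to\mathscr{C}_{-k}^{X}(a,b)$ noted before Theorem~\ref{theorem-2.8}. Your elaboration (non-isomorphy via conjugation by $\mathfrak{AI}$ and Lemma~\ref{lemma-2.6}, pushforward of the topology, and the interchange of left and right shifts under a homeomorphic anti-isomorphism) fills in the details faithfully.
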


\section{On topologizations of subsubsemigroups of  the semigroups $\mathscr{C}^{+}_{\mathbb{Z}}$ and $\mathscr{C}^{-}_{\mathbb{Z}}$}

For arbitrary $(k,l)\in\mathscr{C}_{\mathbb{Z}}$ we denote
\begin{equation*}
  {\uparrow}_{\preccurlyeq}(k,l)=\left\{(i,j)\in\mathscr{C}_{\mathbb{Z}}\colon (k,l)\preccurlyeq(i,j)\right\} \qquad \mbox{and} \qquad
  {\downarrow}_{\preccurlyeq}(k,l)=\left\{(i,j)\in\mathscr{C}_{\mathbb{Z}}\colon (i,j)\preccurlyeq(k,l)\right\},
\end{equation*}
where $\preccurlyeq$ is the natural partial order on $\mathscr{C}_{\mathbb{Z}}$.

We observe that the semilattice  $E(\mathscr{C}_{\mathbb{Z}})$ of idempotents of $\mathscr{C}_{\mathbb{Z}}$ is isomorphic to the semilattice $(\mathbb{Z},\max)$ by the mappings $(i,i)\mapsto i$, $i\in\mathbb{Z}$.

\begin{lemma}\label{lemma-3.1}
Every Hausdorff shift-continuous topology $\tau$ on $(\mathbb{Z},\max)$ is discrete.
\end{lemma}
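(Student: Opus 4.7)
The plan is to prove that for every $n\in\mathbb{Z}$ both the up-set $\{m\colon m\geq n\}$ and the down-set $\{m\colon m\leq n\}$ are clopen in $(\mathbb{Z},\tau)$; then $\{n\}$, being the intersection of two open sets, is itself open, and $\tau$ is discrete.

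Since the operation $\max$ is commutative, shift-continuity of $\tau$ reduces to continuity, for every $n\in\mathbb{Z}$, of the single translation $\rho_n\colon\mathbb{Z}\to\mathbb{Z}$, $m\mapsto\max(m,n)$. The first step is to observe that $\rho_n$ is idempotent with image $\{m\colon m\geq n\}$, i.e., a continuous retraction onto that set. In a Hausdorff space the image of a continuous retraction coincides with its fixed-point set and is therefore closed (this is exactly the observation that drives the proof of Theorem~\ref{theorem-2.1}). Consequently $\{m\colon m\geq n\}$ is closed in $\tau$ for every $n\in\mathbb{Z}$.

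The second step uses the Hausdorff hypothesis a second time: the singleton $\{n\}$ is closed, and by continuity of $\rho_n$ its preimage $\rho_n^{-1}(\{n\})=\{m\colon m\leq n\}$ is also closed in $\tau$. Thus both $\{m\colon m\geq n\}$ and $\{m\colon m\leq n\}$ are closed for every $n\in\mathbb{Z}$. Taking complements and shifting the index by one gives
\begin{equation*}
\{m\colon m\geq n\}=\mathbb{Z}\setminus\{m\colon m\leq n-1\}\quad\text{and}\quad\{m\colon m\leq n\}=\mathbb{Z}\setminus\{m\colon m\geq n+1\},
\end{equation*}
so both families are in fact clopen. Intersecting the two clopen sets at the same $n$ yields the open singleton $\{n\}=\{m\colon m\geq n\}\cap\{m\colon m\leq n\}$, which gives discreteness of $\tau$.

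I do not anticipate a genuine obstacle here; the only content is recognising the two distinct roles played by $\rho_n$, namely as a retraction onto the up-set (which forces the up-set to be closed) and as a continuous map whose fibre over the point $n$ is the full down-set (which forces the down-set to be closed). Everything else is formal manipulation with complements in the linear order of $\mathbb{Z}$.
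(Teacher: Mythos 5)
Your proof is correct and follows essentially the same route as the paper's: the paper likewise shows $\{i\colon i\geqslant n+1\}$ is closed because $k\mapsto\max\{k,n+1\}$ is a continuous retraction of a Hausdorff space (Exercise 1.5.C of Engelking), shows $\{i\colon i\leqslant n-1\}$ is closed as the preimage of the point $n-1$ under $k\mapsto\max\{k,n-1\}$, and intersects the two complementary open sets to isolate $n$. The only cosmetic difference is that you state the closedness of the up-sets and down-sets uniformly in $n$ and then shift the index, whereas the paper works with $n-1$ and $n+1$ directly.
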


\begin{proof}
Fix an arbitrary integer $n$ and suppose that $n$ is a non-isolated point of $(\mathbb{Z},\max,\tau)$. Since the semilattice operation $\max$ on $(\mathbb{Z},\tau)$ is separate continuous, the set ${\uparrow}_{\geqslant}(n-1)=\left\{i\in\mathbb{Z}\colon n-1\geqslant i\right\}$ is closed in the space $(\mathbb{Z},\tau)$ as the full preimage of the point $(n-1)$ under the continuous shift $k\mapsto \max\{k,n-1\}$. This implies that the point $n$ has an open neighbourhood $U(n)$ in $(\mathbb{Z},\tau)$ such that $U(n)\subseteq {\downarrow}_{\geqslant}n=\left\{i\in\mathbb{Z}\colon i\geqslant n\right\}$.

Again, since the shift $k\mapsto \max\{k,n+1\}$ is a continuous retraction of the Hausdorff space $(\mathbb{Z},\tau)$, by Exercise 1.5.C of \cite{Engelking=1989} we get that the set ${\downarrow}_{\geqslant}(n+1)=\left\{i\in\mathbb{Z}\colon i\geqslant n+1\right\}$ is a closed subset of $(\mathbb{Z},\tau)$. Hence $n$ is an isolated point of $(\mathbb{Z},\tau)$, which implies the statement of the lemma.
\end{proof}

\begin{remark}\label{remark-2.2}
\begin{enumerate}
  \item[$(1)$] Since the semilattices $(\mathbb{Z},\max)$ and $(\mathbb{Z},\min)$ are isomorphic by the map $n\mapsto-n$, the statement Lemma~\ref{lemma-3.1} holds for the semilattice $(\mathbb{Z},\min)$.
  \item[$(2)$] The first part of the proof of Lemma~\ref{lemma-3.1} implies that every shift-continuous $T_1$-topology $\tau$ on $(\mathbb{N},\min)$ is discrete.
  \item[$(3)$] The second part of the proof of Lemma~\ref{lemma-3.1} implies that every Hausdorff shift-continuous topology $\tau$ on $(\mathbb{N},\max)$ is discrete. It is obvious that the semilattice $(\mathbb{N},\max)$ admits a non-discrete semigroup $T_1$-topology.
\end{enumerate}
\end{remark}

\begin{lemma}\label{lemma-3.3}
For any integer $k$ the subsemigroup $\mathscr{C}^{+}_{\mathbb{Z}}[k]=\left\{(s,t)\in\mathscr{C}^{+}_{\mathbb{Z}}\colon s\geqslant k\right\}$ of $\mathscr{C}^{+}_{\mathbb{Z}}$ is isomorphic to the semigroup $\mathscr{C}_{+}(a,b)$ by the map $\mathfrak{h}_k\colon \mathscr{C}_{+}(a,b)\to \mathscr{C}^{+}_{\mathbb{Z}}[k]$, $b^ia^j\mapsto (i+k,j+k)$.
\end{lemma}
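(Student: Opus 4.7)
The plan is to verify in turn that $\mathfrak{h}_k$ is well defined, bijective, and a semigroup homomorphism; all three checks are driven by the common formula \eqref{eq-1.1} governing both $\mathscr{C}(a,b)$ (via $b^ia^j\mapsto(i,j)$) and $\mathscr{C}_{\mathbb{Z}}$. First I would observe that if $b^ia^j\in \mathscr{C}_{+}(a,b)$ then $0\leqslant i\leqslant j$, so $k\leqslant i+k\leqslant j+k$, which both places the image inside $\mathscr{C}^{+}_{\mathbb{Z}}$ and satisfies the defining inequality $s\geqslant k$ of the subsemigroup $\mathscr{C}^{+}_{\mathbb{Z}}[k]$. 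Hence $\mathfrak{h}_k$ takes values in $\mathscr{C}^{+}_{\mathbb{Z}}[k]$.

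Next I would establish bijectivity by writing down the explicit inverse $\mathfrak{g}_k\colon \mathscr{C}^{+}_{\mathbb{Z}}[k]\to \mathscr{C}_{+}(a,b)$, $(s,t)\mapsto b^{s-k}a^{t-k}$. The hypotheses $k\leqslant s\leqslant t$ give $0\leqslant s-k\leqslant t-k$, so $b^{s-k}a^{t-k}$ indeed lies in $\mathscr{C}_{+}(a,b)$, and the two compositions $\mathfrak{h}_k\circ\mathfrak{g}_k$ and $\mathfrak{g}_k\circ\mathfrak{h}_k$ are evidently identities.

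The main (and only slightly nontrivial) step is to verify the homomorphism property. I would fix $b^ia^j,\,b^ma^n\in\mathscr{C}_{+}(a,b)$ and split into the three cases $j<m$, $j=m$, $j>m$ coming from \eqref{eq-1.1}. In each case the translation $x\mapsto x+k$ preserves the strict/weak inequalities, so $j<m\Leftrightarrow j+k<m+k$, etc., and the arithmetic expressions arising in the images match termwise; for instance in the case $j<m$ one gets $\mathfrak{h}_k(b^{i-j+m}a^n)=(i-j+m+k,n+k)$ on one side and $(i+k,j+k)\cdot(m+k,n+k)=((i+k)-(j+k)+(m+k),n+k)$ on the other, and the $k$'s cancel. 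The remaining two cases are handled in exactly the same way.

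The only potential obstacle is bookkeeping with the shift by $k$, which must simultaneously preserve the inequalities defining the three branches of \eqref{eq-1.1} and cancel out of the resulting arithmetic; since the shift is an order isomorphism of $\mathbb{Z}$ and is compatible with addition/subtraction in the linear expressions appearing in \eqref{eq-1.1}, no complication arises.
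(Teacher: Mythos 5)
Your proposal is correct and follows essentially the same route as the paper: the paper's proof is precisely the three-case computation comparing $(b^{i_1}a^{j_1}\cdot b^{i_2}a^{j_2})\mathfrak{h}_k$ with $(b^{i_1}a^{j_1})\mathfrak{h}_k\cdot(b^{i_2}a^{j_2})\mathfrak{h}_k$ and noting that the shifts by $k$ cancel. Your explicit verification of well-definedness and of bijectivity via the inverse $(s,t)\mapsto b^{s-k}a^{t-k}$ only spells out details the paper leaves implicit.
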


\begin{proof}
The semigroup operation of $\mathscr{C}_{\mathbb{Z}}$ implies that $\mathscr{C}^{+}_{\mathbb{Z}}[k]$ is a subsemigroup of $\mathscr{C}^{+}_{\mathbb{Z}}$. Since the map $\mathfrak{h}_k$ the equalities
\begin{align*}
  (b^{i_1}a^{j_1}\cdot b^{i_2}a^{j_2})\mathfrak{h}_k &=
  \left\{
    \begin{array}{ll}
      (b^{i_1-j_1+i_2}a^{j_2})\mathfrak{h}_k, & \hbox{if~} j_1<i_2;\\
      (b^{i_1}a^{j_2})\mathfrak{h}_k,         & \hbox{if~} j_1=i_2;\\
      (b^{i_1}a^{j_1-i_2+j_2})\mathfrak{h}_k, & \hbox{if~} j_1>i_2
    \end{array}
  \right.=
  \\
   &=
   \left\{
    \begin{array}{ll}
      (i_1-j_1+i_2+k,j_2+k), & \hbox{if~} j_1<i_2;\\
      (i_1+k,j_2+k),         & \hbox{if~} j_1=i_2;\\
      (i_1+k,j_1-i_2+j_2+k), & \hbox{if~} j_1>i_2
    \end{array}
  \right.
\end{align*}
and
\begin{align*}
  (b^{i_1}a^{j_1})\mathfrak{h}_k\cdot (b^{i_2}a^{j_2})\mathfrak{h}_k &=(i_1+k,j_1+k)\cdot(i_2+k,j_2+k)= \\
   &=
   \left\{
     \begin{array}{ll}
       (i_1+k-(j_1+k)+i_2+k,j_2+k), & \hbox{if~} j_1+k<i_2+k;\\
       (i_1+k,j_2+k),               & \hbox{if~} j_1+k=i_2+k; \\
       (i_1+k,j_1+k-(i_2+k)+j_2+k), & \hbox{if~} j_1+k>i_2+k
     \end{array}
   \right.=
   \\
   &=
   \left\{
    \begin{array}{ll}
      (i_1-j_1+i_2+k,j_2+k), & \hbox{if~} j_1<i_2;\\
      (i_1+k,j_2+k),         & \hbox{if~} j_1=i_2;\\
      (i_1+k,j_1-i_2+j_2+k), & \hbox{if~} j_1>i_2
    \end{array}
  \right.
\end{align*}
imply the statement of the lemma.
\end{proof}

If $\preccurlyeq$ is the natural partial order on $\mathscr{C}_{\mathbb{Z}}$ and $S$ is a subsemigroup of $\mathscr{C}^{+}_{\mathbb{Z}}$ then in this section by $\preccurlyeq^S$ we denote the restriction of the binary relation $\preccurlyeq$ onto $S$, i.e., $(i,j)\preccurlyeq^S(k,l)$ if and only if $(i,j)\preccurlyeq(k,l)$ in $\mathscr{C}_{\mathbb{Z}}$, and put
\begin{equation*}
  {\uparrow}_{\preccurlyeq^S}(m,n)=\left\{(p,q)\in S\colon (m,n)\preccurlyeq^S(p,q)\right\} \quad \mbox{and} \quad {\downarrow}_{\preccurlyeq^S}(m,n)=\left\{(p,q)\in S\colon (p,q)\preccurlyeq^S(m,n)\right\},
\end{equation*}
for any $(m,n)\in S$.

\begin{theorem}\label{theorem-3.4}
Let $S$ be a subsemigroup of $\mathscr{C}^{+}_{\mathbb{Z}}$ $(\mathscr{C}^{-}_{\mathbb{Z}})$ such that the band $E(S)$ of $S$ is isomorphic to the semilattice $(\mathbb{Z},\max)$. Then every Hausdorff left-continuous \emph{(}right-continuous\emph{)} topology $\tau$ on $S$  is discrete.
\end{theorem}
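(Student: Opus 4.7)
The plan is to show that every point $s=(k_0,l_0)\in S$ is isolated in $(S,\tau)$, using two discretisation results that I would derive up front. First, since $E(S)\cong(\mathbb{Z},\max)$ is commutative, left-continuity of $\tau$ makes the induced topology on $E(S)$ shift-continuous and Hausdorff, so Lemma~\ref{lemma-3.1} forces $\tau|_{E(S)}$ to be discrete. Second, for each $k\in Z_S$ (the set of first coordinates of idempotents of $S$) the subsemigroup $S_k:=S\cap\mathscr{C}^{+}_{\mathbb{Z}}[k]$ corresponds under the isomorphism $\mathfrak{h}_k$ of Lemma~\ref{lemma-3.3} to a subsemigroup of $\mathscr{C}_{+}(a,b)$ whose idempotent set $\{(m,m):m\in Z_S,\ m\geqslant k\}$ is infinite (because $Z_S$ is cofinal in $\mathbb{Z}$ upwards). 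Hence Corollary~\ref{corollary-2.2} applies and yields that the subspace topology $\tau|_{S_k}$ is discrete.

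Given these, for $s=(k_0,l_0)\in S$ I would choose $m\in Z_S$ with $m>l_0$ and $k\in Z_S$ with $k\leqslant k_0$. Left-continuity turns $\rho_{(m,m)}$ into a continuous retraction onto $S\cdot(m,m)=\{(a,b)\in S: b\geqslant m\}$, whose image is closed in $S$ by Exercise~1.5.C of \cite{Engelking=1989}. Consequently $O:=\{(a,b)\in S: b<m\}$ is an open neighbourhood of $s$. Discreteness of $\tau|_{S_k}$ then yields an open $V\subseteq S$ with $V\cap S_k=\{s\}$, and therefore $W:=V\cap O$ is an open neighbourhood of $s$ whose only point in $S_k$ is $s$ itself.

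The crux, and the obstacle I expect to be the hardest, is to exclude from $W$ any ``leakage'' element $(a,b)\in S$ with $a<k$ and $b<m$: because $\tau$ is merely left-continuous, the filtrations $\{a\geqslant k\}$ arise naturally as \emph{closed} rather than open sets, so one cannot simply intersect $W$ with an open analogue. My strategy is to iterate the construction over a strictly decreasing cofinal sequence $k>k_1>k_2>\cdots$ in $Z_S$; at each stage $i$ the set $\{(a,b)\in S: k_{i+1}\leqslant a<k_i,\ b<m\}$ is finite (it sits inside a bounded box in $\mathbb{Z}^2\cap\mathscr{C}^{+}_{\mathbb{Z}}$), so Hausdorffness lets me separate $s$ from its finitely many neighbours there by an open subset of $S$. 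The decisive step is then to argue that a single finite stage already refines $W$ to $\{s\}$, by combining the Hausdorff separation with the discreteness of $\tau|_{S_{k_i}}$ and the closedness of the retraction images $S\cdot(m',m')$ for $m'\in Z_S$. The dual statement for $\mathscr{C}^{-}_{\mathbb{Z}}$ and right-continuous topologies then follows by transporting the argument across the anti-isomorphism $(i,j)\mapsto(j,i)$.
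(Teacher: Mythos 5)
Your preliminaries are fine: the subspace topology on each $S_k=S\cap\mathscr{C}^{+}_{\mathbb{Z}}[k]$ is a Hausdorff left-continuous topology on a subsemigroup with infinitely many idempotents, so Lemma~\ref{lemma-3.3} and Corollary~\ref{corollary-2.2} make it discrete, and $O=\{(a,b)\in S\colon b<m\}$ is indeed open, since $\rho_{(m,m)}$ is a continuous retraction with closed image $S\cdot(m,m)=\{(a,b)\in S\colon b\geqslant m\}$ by Exercise~1.5.C of \cite{Engelking=1989}. But the gap sits exactly where you placed it, and the repair you sketch cannot close it. The leakage set $\{(a,b)\in S\colon a<k,\ b<m\}$ is genuinely infinite: it contains all idempotents $(a,a)$ with $a<k$, and these exist for arbitrarily negative $a$ because $E(S)\cong(\mathbb{Z},\max)$ (plus, in general, infinitely many non-idempotents). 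Hausdorffness separates $s$ from only finitely many points at a time, so your iteration over the boxes $\{k_{i+1}\leqslant a<k_i,\ b<m\}$ produces an infinite descending chain of neighbourhoods whose intersection need not be open, and stopping at any single finite stage $i$ still leaves the infinite region $\{a<k_i,\ b<m\}$ inside $W$. Nothing in your toolkit bounds the first coordinate from below: the sets $\{(a,b)\in S\colon a\geqslant k'\}$ would be retracts under the \emph{left} shift $\lambda_{(k',k')}$, which is precisely the shift that left-continuity does not make continuous, so you have no argument that they are closed, let alone open.

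The idea you are missing is the paper's device for collapsing the two-dimensional leakage to a single $\preccurlyeq$-chain. Take an idempotent $(n,n)\in E(S)$ with $n>l_0$ and consider $\rho_{(n,n)}^{-1}\bigl((k_0,l_0)\cdot(n,n)\bigr)={\uparrow}_{\preccurlyeq^S}(n-l_0+k_0,n)$; the point $(k_0,l_0)\cdot(n,n)$ is isolated in the relevant discrete subsemigroup $S_{k'}$, and pulling it back through the continuous right shift $\rho_{(n,n)}$ yields a neighbourhood of $(k_0,l_0)$ contained in the chain $\{(u,v)\colon u-v=k_0-l_0,\ v\leqslant n\}$. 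If this chain meets $S$ in a finite set, Hausdorffness finishes; otherwise one picks an idempotent coordinate $s<l_0$ and uses that ${\uparrow}_{\preccurlyeq^S}(s-l_0+k_0,s)=\rho_{(s,s)}^{-1}\bigl((s-l_0+k_0,s)\bigr)$ is \emph{closed} (preimage of a singleton, closed by Hausdorffness, under a continuous map), so that the difference of the two sets is a finite open neighbourhood of $(k_0,l_0)$. Without some mechanism of this kind --- confining a neighbourhood of $s$ to one diagonal and then truncating that diagonal by a closed tail --- your outline cannot be completed.
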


\begin{proof}
If $S=E(S)$ then the statement of the theorem follows from Lemma~\ref{lemma-3.1}. Hence later we assume that  $S\neq E(S)$.

Fix an arbitrary $(i,j)\in S$. Since the band $E(S)$ is isomorphic to the semilattice $(\mathbb{Z},\max)$, the semigroup operation of $\mathscr{C}^{+}_{\mathbb{Z}}$ implies that there exists the maximum integer $k$ such that $(k,k)\in E(S)$ and $k\leqslant i$. By Lemma~\ref{lemma-3.3} the subsemigroup $\mathscr{C}^{+}_{\mathbb{Z}}[k]$ of $\mathscr{C}^{+}_{\mathbb{Z}}$ is isomorphic to the semigroup $\mathscr{C}_{+}(a,b)$, and hence by Corollary~\ref{corollary-2.2}, $S_k=S\cap\mathscr{C}^{+}_{\mathbb{Z}}[k]$ is a discrete subsemigroup of $S$.

Again, since $E(S)$ is isomorphic to $(\mathbb{Z},\max)$, the semigroup operation of $\mathscr{C}^{+}_{\mathbb{Z}}$ implies that there exists the minimum integer $n$ such that $(n,n)\in E(S)$ and $j<n$. Then we have that $(i,j)\cdot (n,n)=(n-j+i,n)$. We observe that
\begin{equation*}
  {\uparrow}_{\preccurlyeq^S}(n-j+i,n)=\left\{(s,t)\in S\colon (s,t)\cdot (n,n)=(n-j+i,n) \right\}=\left\{(n-j+i-p,n-p)\colon p\in\omega\right\}.
\end{equation*}
By Lemma~\ref{lemma-3.3} and Corollary~\ref{corollary-2.2}, the point $(n-j+i,n)$ is isolated in the space $S_n=S\cap\mathscr{C}^{+}_{\mathbb{Z}}[n]$. Then the equality
\begin{equation*}
  (s,t)\cdot(n,n)=
  \left\{
    \begin{array}{ll}
      (n-t+s,n), & \hbox{if~} t\leqslant n;\\
      (s,t),     & \hbox{if~} t>n
    \end{array}
  \right.
\end{equation*}
and the left-continuity of the semigroup operation of $(S,\tau)$ imply that the set ${\uparrow}_{\preccurlyeq^S}(n-j+i,n)$ is open in $(S,\tau)$. It is obvious that $(i,j)\in {\uparrow}_{\preccurlyeq^S}(n-j+i,n)$. If the set ${\uparrow}_{\preccurlyeq^S}(n-j+i,n)$ is finite, then the proof is complete.

Suppose the set ${\uparrow}_{\preccurlyeq^S}(n-j+i,n)$ is infinite. Since $E(S)$ is isomorphic to $(\mathbb{Z},\max)$, the semigroup operation of $\mathscr{C}^{+}_{\mathbb{Z}}$ implies that there exists the maximum integer $s<j$ such that $(s,s)\in E(S)$ and $(i,j)\cdot(s,s)=(i,j)$. We observe that the semigroup operation of $\mathscr{C}^{+}_{\mathbb{Z}}$ implies that the set ${\uparrow}_{\preccurlyeq^S}(i,j)$ is infinite, and hence there exists $(x,y)\in {\uparrow}_{\preccurlyeq^S}(i,j)$ such that $(i,j)\neq (x,y)\cdot(s,s)$. Without loss of generality we may assume that $y<s$ and hence we have that $(x,y)\cdot(s,s)=(s-y+x,s)$. Then Hausdorffness and left-continuity of the semigroup operation of $(S,\tau)$ imply that the set ${\uparrow}_{\preccurlyeq^S}(s-y+x,s)$ is closed in $(S,\tau)$ as the full preimage of the point $(s-y+x,s)$ under the right shift $\rho_{(s,s)}\colon S\to S$, $(a,b)\mapsto (a,b)\cdot (s,s)$. This implies the point $(i,j)$ has a finite open neighbourhood in $(S,\tau)$, and hence is isolated.

The dual statement of the theorem follows from the fact that the semigroups $\mathscr{C}^{+}_{\mathbb{Z}}$ and $\mathscr{C}^{-}_{\mathbb{Z}}$ are antiisomorphic by the mapping $\mathfrak{AI}\colon {\mathscr{C}}^+_\mathbb{Z}\to {\mathscr{C}}^-_\mathbb{Z}$, $(i,j)\mapsto(j,i)$.
\end{proof}

The following example shows that the extended bicyclic semigroup ${\mathscr{C}}_\mathbb{Z}$ admits a Hausdorff right-continuous (left-continuous) topology $\tau_p^r$ ($\tau_p^l$) such that left (right) translations are not continuous in $(\mathscr{C}_\mathbb{Z},\tau_p^r)$ ($(\mathscr{C}_\mathbb{Z},\tau_p^l)$).

\begin{example}\label{example-3.5}
Fix an arbitrary prime positive integer $p$.  For any  $(i,j)\in{\mathscr{C}}_\mathbb{Z}$ and any $n\in\omega$ we denote
\begin{equation*}
U_n^r(i,j)=\left\{(i,j+p^n\cdot k)\colon k\in \omega\right\}.
\end{equation*}
Then the family $\mathscr{B}_p^r=\left\{\mathscr{B}_p^r(i,j)\colon  i,j\in\mathbb{Z}\right\}$, where $\mathscr{B}_p^r(i,j)=\left\{U_n^r(i,j)\colon n\in\omega\right\}$ for $(i,j)\in{\mathscr{C}}_\mathbb{Z}$, satisfies the properties (BP1)--(BP4) of \cite{Engelking=1989}, and hence it generates a Hausdorff topology $\tau_p^r$ on the extended bicyclic semigroup ${\mathscr{C}}_\mathbb{Z}$. Obviously that $\tau_p^r$ is a non-discrete topology $\mathscr{C}_\mathbb{Z}$, and moreover all points of $(\mathscr{C}_\mathbb{Z},\tau_p^r)$ are non-isolated.

The proof of the statements that the semigroup operation on $({\mathscr{C}}_\mathbb{Z},\tau_p^r)$ is right-continuous and it is not left-continuous are similar to the corresponding statements in Example~\ref{example-2.1}. Also, by the anti-isomorphism $\mathfrak{AI}\colon {\mathscr{C}}_\mathbb{Z}\to {\mathscr{C}}_\mathbb{Z}$, $(i,j)\mapsto(j,i)$ we get a Hausdorff left-continuous topology $\tau_p^l$ such that right translations are not continuous in $(\mathscr{C}_\mathbb{Z},\tau_p^l)$.
\end{example}

\begin{remark}\label{remark-3.6}
\begin{enumerate}
  \item We observe that for any integer $i_0$ the subset $\widetilde{L}_{i_0}=\left\{(i_0,i_0+i)\colon i\in\mathbb{Z}\right\}$ is not a subsemigroup of ${\mathscr{C}}_\mathbb{Z}$ because $(i_0,i_0-1)\cdot(i_0,i_0-1)=(i_0+1,i_0-1)\notin \widetilde{L}_{i_0}$.
  \item It is obvious that the topology $\tau_p^r$ ($\tau_p^l$) of the extended bicyclic semigroup ${\mathscr{C}}_\mathbb{Z}$ induces on the semigroup ${\mathscr{C}}_\mathbb{Z}^+$ (${\mathscr{C}}_\mathbb{Z}^-$) a Hausdorff non-discrete right-continuous (left-continuous) topology which is not left-continuous (right-continuous).
  %\item
\end{enumerate}
\end{remark}

The following example shows that the statement of Theorem~\ref{theorem-3.4} does not hold in the case when the band $E(S)$ of a subsemigroup $S$ of ${\mathscr{C}}_\mathbb{Z}^+$ is infinite.

\begin{example}\label{example-3.7}
We define $S=S_0\cup S_1$, where $S_0=\left\{(-i,-i)\in{\mathscr{C}}_\mathbb{Z}^+\colon i\in\mathbb{N}\right\}$ and $S_1=\left\{(0,k){\mathscr{C}}_\mathbb{Z}^+\colon k\in\omega\right\}$. Simple verifications show that $S$ is a commutative subsemigroup of ${\mathscr{C}}_\mathbb{Z}^+$. Moreover, $S_1$ is an ideal of $S$ such that $(-i,-i)\cdot (0,k)=(0,k)$ for all $(-i,-i)\in S_0$ and $(0,k)\in S_1$.

We define a topology $\tau$ on $S$ in the following way. All points of the form $(-i,-i)$, $i\in\mathbb{N}$, are isolated in $(S,\tau)$. Fix an arbitrary prime integer $p$. The family $\mathscr{B}_\tau=\left\{U_n(0,k)\colon n\in\mathbb{N}\right\}$, where
\begin{equation*}
  U_n(0,k)=\left\{(0,k+p^ns)\in S_1\colon s\in\omega\right\},
\end{equation*}
is the base of the topology $\tau$ at the point $(0,k)\in S_1$. Simple verifications show that $\tau$ is a Hausdorff topology on $S$.

Since $(-i,-i)\cdot (-j,-j)=(\max\{-i,-j\},\max\{-i,-j\})$, $(-i,-i)\cdot U_n(0,k)= U_n(0,k)$, and $U_n(0,k)\cdot U_n(0,l)\subseteq U_n(0,k+l)$ for any $i,j,k\in\mathbb{N}$ and $k,l\in\omega$, the semigroup operation on $(S,\tau)$ is continuous.
\end{example}

By Lemma~\ref{lemma-3.3} the semigroup ${\mathscr{C}}_\mathbb{Z}^+$ contains infinitely many isomorphic copies of the semigroup $\mathscr{C}_{+}(a,b)$ and hence by Theorem~\ref{theorem-2.7} ${\mathscr{C}}_\mathbb{Z}^+$ has continuum many subsemigroups $\left\{S_\alpha\colon \alpha\in\mathfrak{c}\right\}$ such that for any $\alpha\in\mathfrak{c}$ the following conditions hold:
\begin{enumerate}
  \item[$(i)$] the band $E(S_\alpha)$ is isomorphic to $(\omega,\max)$;
  \item[$(ii)$] for any idempotent $(i,i)$ of $S_\alpha$ the set $S\setminus (S\cdot (i,i))$ is finite;
  \item[$(iii)$] every left-continuous Hausdorff topology on $S$ is discrete.
\end{enumerate}

Example~\ref{example-3.8} and Theorem~\ref{theorem-3.10} imply that ${\mathscr{C}}_\mathbb{Z}^+$ has continuum many subsemigroups $\left\{S_\alpha\colon \alpha\in\mathfrak{c}\right\}$ such that for any $\alpha\in\mathfrak{c}$ the following conditions hold:
\begin{enumerate}
  \item[$(i^\circ)$] the band $E(S_\alpha)$ is isomorphic to $(\mathbb{Z},\max)$;
  \item[$(ii^\circ)$] for any idempotent $(i,i)$ of $S_\alpha$ the set $S\setminus (S\cdot (i,i))$ is infinite;
  \item[$(iii^\circ)$] every left-continuous Hausdorff topology on $S$ is discrete.
\end{enumerate}

\begin{example}\label{example-3.8}
Latter we shall assume that $X=\left\{x_i\colon i\in\omega\right\}$ where $\left\{x_i\right\}_{i\in\omega}$ is a steadily increasing sequence in $\omega$. For any positive integer $k$ we define
\begin{equation*}
  {\mathscr{C}}_\mathbb{Z}^+(k,X)=\left\{(-i,-i)\colon i\in\omega\right\}\cup \left\{(x_i,x_i)\colon i\in\omega\right\}\cup \left\{(m,n)\in\mathscr{C}^{+}_{\mathbb{Z}}[0]\colon m\geqslant k\right\}.
\end{equation*}
Simple verification show that ${\mathscr{C}}_\mathbb{Z}^+(k,X)$ is a subsemigroup of ${\mathscr{C}}_\mathbb{Z}^+$ for any positive integer $k$.
\end{example}

\begin{lemma}\label{lemma-3.9}
Let $\left\{x_i\right\}_{i\in\omega}$ and $\left\{y_i\right\}_{i\in\omega}$ be steadily increasing sequences in $\omega$ and let $X=\left\{x_i\colon i\in\omega\right\}$ and $Y=\left\{y_i\colon i\in\omega\right\}$. Then the subsemigroups ${\mathscr{C}}_\mathbb{Z}^+(1,X)$ and ${\mathscr{C}}_\mathbb{Z}^+(1,Y)$ of $\mathscr{C}^{+}_{\mathbb{Z}}$ are isomorphic if and only if $x_i=y_i$ for all $i\in\omega$.
\end{lemma}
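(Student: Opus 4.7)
The plan is to follow the pattern of Lemma~\ref{lemma-2.6}: pin down where a hypothetical isomorphism sends the distinguished idempotents $(x_i,x_i)$, and then read off the indices by a cardinality-of-solutions count. The direction $(\Leftarrow)$ is immediate, since $x_i=y_i$ for every $i$ gives $X=Y$ and the two subsemigroups coincide.

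For $(\Rightarrow)$, write $S_X=\mathscr{C}_\mathbb{Z}^+(1,X)$, $S_Y=\mathscr{C}_\mathbb{Z}^+(1,Y)$ and let $\mathfrak{I}\colon S_X\to S_Y$ be a semigroup isomorphism. The restriction $\mathfrak{I}|_{E(S_X)}$ is a semilattice isomorphism of $(\mathbb{Z}_{\leqslant 0}\cup X,\max)$ onto $(\mathbb{Z}_{\leqslant 0}\cup Y,\max)$, but both bands are order-isomorphic to $(\mathbb{Z},\max)$, so Lemma~\ref{lemma-2.5} does not apply directly; one first needs to cut out an algebraically definable $\omega$-type subsemilattice. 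A direct application of~\eqref{eq-1.1} shows that an idempotent $(a,a)\in E(S_X)$ is the maximum left identity of some non-idempotent of $S_X$ if and only if $a\in\{0\}\cup X$: for such $a$ the element $(a,a+1)$ lies in $S_X$ and has $(a,a)$ as its maximum left identity, while every non-idempotent of $S_X$ has first coordinate $\geqslant 0$, so no $(-i,-i)$ with $i\geqslant 1$ can ever be a maximum left identity. This property is preserved by $\mathfrak{I}$, so $\mathfrak{I}$ restricts to a semilattice isomorphism $\{(0,0)\}\cup\{(x_i,x_i)\colon i\in\omega\}\to\{(0,0)\}\cup\{(y_i,y_i)\colon i\in\omega\}$ between two $\omega$-type subsemilattices of $(\omega,\max)$, and Lemma~\ref{lemma-2.5} forces $\mathfrak{I}(x_i,x_i)=(y_i,y_i)$ for every $i$.

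The remainder transcribes the cardinality argument of Lemma~\ref{lemma-2.6}. A direct computation using~\eqref{eq-1.1} shows that the solution set of $(x_i,x_i)\cdot z=(x_i,x_i+1)$ in $S_X$ is exactly $\{(p,p+1)\colon 0\leqslant p\leqslant x_i\}$ and therefore has cardinality $x_i+1$. Applying $\mathfrak{I}$ transports this to $(y_i,y_i)\cdot z'=\mathfrak{I}(x_i,x_i+1)$ in $S_Y$; a parallel case analysis on~\eqref{eq-1.1} shows that the cardinality of its solution set is $y_i+1$ when the first coordinate of $\mathfrak{I}(x_i,x_i+1)$ equals $y_i$ and $1$ otherwise. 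Since $\mathfrak{I}$ preserves solution cardinalities, the match forces $x_i+1=y_i+1$, and hence $x_i=y_i$. The delicate step in the whole argument is the algebraic characterisation of $\{(0,0)\}\cup\{(x_i,x_i)\colon i\in\omega\}$ inside $E(S_X)$; once that is in place everything else is a routine bookkeeping calculation parallel to Lemma~\ref{lemma-2.6}.
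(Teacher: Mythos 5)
Your argument is correct in substance and follows the same overall strategy as the paper's: isolate the ``non-negative part'' of $\mathscr{C}_\mathbb{Z}^+(1,X)$ as an isomorphism invariant and then run the solution-counting argument of Lemma~\ref{lemma-2.6}. Where you genuinely diverge is in how that invariance is established, and your version is arguably the more robust one. The paper sets $S_X=\mathscr{C}_\mathbb{Z}^+(1,X)\cap\mathscr{C}^{+}_{\mathbb{Z}}[0]$, justifies that $\mathfrak{I}$ restricts to an isomorphism $S_X\to S_Y$ by asserting that the equation $z\cdot(i,i+p)=(i,i+p)$ has finitely many solutions, and then invokes Lemma~\ref{lemma-2.6} as a black box (it afterwards also verifies $(-i,-i)\mathfrak{I}=(-i,-i)$, which is not needed for the conclusion $x_i=y_i$). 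As stated the paper's criterion is shaky: the solution set of $z\cdot(i,i+p)=(i,i+p)$ consists of all idempotents $(c,c)$ with $c\leqslant i$, hence contains every $(-j,-j)$ and is infinite. Your substitute --- an idempotent belongs to $\{(0,0)\}\cup\{(x_i,x_i)\colon i\in\omega\}$ iff it occurs as the extremal idempotent left identity of some non-idempotent --- is a correct, isomorphism-invariant characterisation, because every non-idempotent of $\mathscr{C}_\mathbb{Z}^+(1,X)$ has first coordinate $\geqslant 0$ while its set of idempotent left identities always contains $(0,0)$; you then inline the counting of Lemma~\ref{lemma-2.6} instead of citing it, which is fine. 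Two small caveats. First, ``maximum left identity'' should be read with respect to the intrinsic semilattice order on $E(S_X)$, under which $(a,a)\leqslant(a',a')$ iff $a\geqslant a'$; the set of idempotent left identities of a non-idempotent is upward-unbounded in that order, so the distinguished element is its \emph{minimum} --- a purely terminological point, since the relevant linear order is preserved by any isomorphism. Second, your appeal to Lemma~\ref{lemma-2.5} to force $\mathfrak{I}(x_i,x_i)=(y_i,y_i)$ (rather than an index shift) tacitly assumes that $0$ lies in both of $X,Y$ or in neither; when $0$ belongs to exactly one of them the matching shifts by one, but in that case $\mathscr{C}_\mathbb{Z}^+(1,X)$ and $\mathscr{C}_\mathbb{Z}^+(1,X\setminus\{0\})$ coincide as sets (since $(0,0)$ already lies in $\{(-i,-i)\colon i\in\omega\}$), so this is a defect of the lemma's statement that the paper's own proof shares, not a gap particular to your argument.
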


\begin{proof}
The implication $(\Leftarrow)$ is trivial.

$(\Rightarrow)$  Suppose that $\mathfrak{I}\colon {\mathscr{C}}_\mathbb{Z}^+(1,X)\to{\mathscr{C}}_\mathbb{Z}^+(1,Y)$ is an isomorphism. Put $S_X={\mathscr{C}}_\mathbb{Z}^+(1,X)\cap \mathscr{C}^{+}_{\mathbb{Z}}[0]$ and $S_Y={\mathscr{C}}_\mathbb{Z}^+(1,Y)\cap \mathscr{C}^{+}_{\mathbb{Z}}[0]$. Simple verifications show that $S_X$ and $S_Y$ are subsemigroups of ${\mathscr{C}}_\mathbb{Z}^+(1,X)$ and ${\mathscr{C}}_\mathbb{Z}^+(1,Y)$, respectively. Moreover $S_X$ is isomorphic to $\mathscr{C}_{+1}^{X}(a,b)$ and $S_Y$ is isomorphic to $\mathscr{C}_{+1}^{Y}(a,b)$ by the mapping $(m,n)\mapsto b^ma^n$. The semigroup operation of ${\mathscr{C}}_\mathbb{Z}^+(1,X)$ and ${\mathscr{C}}_\mathbb{Z}^+(1,Y)$ implies that the equation $z\cdot (i,i+p)=(i,i+p)$ (in ${\mathscr{C}}_\mathbb{Z}^+(1,X)$ and ${\mathscr{C}}_\mathbb{Z}^+(1,Y)$), where $i\in\omega$ and $p\in\mathbb{N}$, has finitely many solutions, we conclude that the restriction $\mathfrak{I}_{S_X}\colon S_X\to S_Y$ is an isomorphism, and hence by Lemma~\ref{lemma-2.6} we get that $x_i=y_i$ for all $i\in\omega$.
Then the semillattices ${\mathscr{C}}_\mathbb{Z}^+(1,X)\setminus S_X$ and ${\mathscr{C}}_\mathbb{Z}^+(1,Y)\setminus S_Y$ are isomorphic. It is obvious that ${\mathscr{C}}_\mathbb{Z}^+(1,X)\setminus S_X$ and ${\mathscr{C}}_\mathbb{Z}^+(1,Y)\setminus S_Y$ are isomorphic to the semilattice $(\omega,\min)$ by the mapping $(-i,-i)\mapsto i$. Since the natural partial order on the semilattice $(\omega,\min)$ is complete, i.e., $(\omega,\min,\preccurlyeq)$, we have that $(-i,-i)\mathfrak{I}=(-i,-i)$ for any $i\in\omega$.
\end{proof}

We observe that for any steadily increasing sequence $\left\{x_i\right\}_{i\in\omega}$ in $\omega$ the semigroup ${\mathscr{C}}_\mathbb{Z}^+(1,X)$, where $X=\left\{x_i\colon i\in\omega\right\}$, satisfies the conditions $(i^\circ)$, $(ii^\circ)$, and $(iii^\circ)$. Then Lemma~\ref{lemma-3.9} implies the following theorem.

\begin{theorem}\label{theorem-3.10}
The semigroup ${\mathscr{C}}_\mathbb{Z}^+$ contains continuum many non-isomorphic subsemigroups $S_\alpha$ such that every left-continuous Hausdorff topology on $S_\alpha$ is discrete and $S_\alpha$ satisfies the conditions $(i^\circ)$, $(ii^\circ)$, and $(iii^\circ)$.
\end{theorem}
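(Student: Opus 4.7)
The plan is to take the family $\{S_\alpha = \mathscr{C}_{\mathbb{Z}}^+(1, X_\alpha) : \alpha \in \mathfrak{c}\}$ from Example~\ref{example-3.8}, indexed by any enumeration of the $\mathfrak{c}$ many infinite subsets of $\omega$. Non-isomorphism of distinct members is immediate from Lemma~\ref{lemma-3.9}: whenever $X_\alpha \neq X_\beta$, their steadily increasing enumerations differ at some index, forcing $S_\alpha \not\cong S_\beta$.

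To verify $(i^\circ)$, I would tally the idempotents coming from the three summands defining $\mathscr{C}_{\mathbb{Z}}^+(1, X_\alpha)$. The first summand $\{(-i,-i) : i\in\omega\}$ supplies all idempotents $(n,n)$ with $n\leqslant 0$, and the third summand supplies those with $n\geqslant 1$; the second summand only contributes elements already present in one of the other two. Their union is $\{(n,n) : n\in\mathbb{Z}\}$, and the product rule $(i,i)\cdot(j,j) = (\max\{i,j\}, \max\{i,j\})$ in $\mathscr{C}_{\mathbb{Z}}$ exhibits the semilattice isomorphism $E(S_\alpha)\to(\mathbb{Z},\max)$, $(n,n)\mapsto n$. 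For $(ii^\circ)$, fix an idempotent $(i,i)\in E(S_\alpha)$ and split into the three cases for $(s,t)\cdot(i,i)$: in every case the second coordinate of the product is at least $i$, so $S_\alpha\cdot(i,i) \subseteq \{(a,b)\in S_\alpha : b\geqslant i\}$, whence $S_\alpha\setminus(S_\alpha\cdot(i,i))$ contains the infinite set $\{(-j,-j) : j > \max\{0,-i\}\}$.

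Finally, $(iii^\circ)$ is an immediate consequence of Theorem~\ref{theorem-3.4} once $(i^\circ)$ is established: $S_\alpha$ sits inside $\mathscr{C}_{\mathbb{Z}}^+$ with $E(S_\alpha)\cong(\mathbb{Z},\max)$, so every left-continuous Hausdorff topology on $S_\alpha$ is discrete. There is essentially no obstacle here -- all of the substantive machinery has already been developed in Lemma~\ref{lemma-3.9} and Theorem~\ref{theorem-3.4}, and the theorem just assembles these facts with the short verifications of $(i^\circ)$ and $(ii^\circ)$; the only point that warrants a brief bookkeeping check is confirming that the three summands exhaust the integer diagonal in $(i^\circ)$.
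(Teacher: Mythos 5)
Your overall route is exactly the paper's: the proof of Theorem~\ref{theorem-3.10} in the paper consists of the single observation preceding it (that each $\mathscr{C}_\mathbb{Z}^+(1,X)$ satisfies $(i^\circ)$, $(ii^\circ)$, $(iii^\circ)$) combined with Lemma~\ref{lemma-3.9} and the fact that $\omega$ has continuum many infinite subsets. Your verification of $(ii^\circ)$, your derivation of $(iii^\circ)$ from Theorem~\ref{theorem-3.4}, and your use of Lemma~\ref{lemma-3.9} for non-isomorphism all match that plan.

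There is, however, a genuine problem in your verification of $(i^\circ)$, and it is not a harmless bookkeeping slip: you assert that the second summand $\left\{(x_i,x_i)\colon i\in\omega\right\}$ ``only contributes elements already present in one of the other two'' summands. If that were true, then $\mathscr{C}_\mathbb{Z}^+(1,X)$ would not depend on $X$ at all: every $S_\alpha$ in your family would be literally the same subsemigroup of $\mathscr{C}_\mathbb{Z}^+$, Lemma~\ref{lemma-3.9} would be false, and there would be no continuum of non-isomorphic semigroups to extract. Your $(i^\circ)$ paragraph and your non-isomorphism paragraph cannot both be right, and your proof uses both.

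The resolution is that the third summand in Example~\ref{example-3.8} must be read as the off-diagonal part $\left\{(m,n)\colon m\geqslant 0,\ n-m\geqslant k\right\}$ rather than $\left\{(m,n)\in\mathscr{C}_\mathbb{Z}^+[0]\colon m\geqslant k\right\}$ taken literally; this is forced by the proof of Lemma~\ref{lemma-3.9}, which identifies $\mathscr{C}_\mathbb{Z}^+(1,X)\cap\mathscr{C}_\mathbb{Z}^+[0]$ with $\mathscr{C}_{+1}^{X}(a,b)=\left\{b^ia^j\colon j-i\geqslant 1\right\}\cup\left\{b^{x_i}a^{x_i}\colon i\in\omega\right\}$, and by the analogy with Example~\ref{example-2.3}. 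Under this reading the only idempotents of $S_\alpha$ with positive index are the $(x_i,x_i)$, so distinct $X$ give genuinely distinct semigroups, and
\begin{equation*}
E(S_\alpha)=\left\{(n,n)\colon n\leqslant 0\right\}\cup\left\{(x_i,x_i)\colon i\in\omega\right\},
\end{equation*}
which is not the full integer diagonal but is still order-isomorphic, hence $\max$-isomorphic, to $(\mathbb{Z},\max)$; so $(i^\circ)$, and with it $(iii^\circ)$ via Theorem~\ref{theorem-3.4}, still hold. Your argument for $(ii^\circ)$ survives unchanged. You need to replace the claim that the three summands exhaust the diagonal with this corrected description of $E(S_\alpha)$ and a one-line order-isomorphism argument.
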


\section{Embedding of the semigroup ${\mathscr{C}}_\mathbb{Z}^+$ into a compact topological semigroup}

The following example of the closure of the extended bicyclic semigroup ${\mathscr{C}}_\mathbb{Z}$ is presented in \cite{Fihel-Gutik=2011}.

\begin{example}[{\cite[Example~5]{Fihel-Gutik=2011}}]\label{example-4.1}
By $G_1(1)$ and $G_0$ we denote the additive groups of integers. We denote by $(n)^1$ and $(m)^0$ the elements of the groups $G_1(1)$ and $G_0$, respectively, $n,m\in\mathbb{Z}$. This means that $(m)^1\cdot (n)^1=(m+n)^1$ and $(m)^0\cdot (n)^0=(m+n)^0$ for any $m,n\in\mathbb{Z}$. Put $S=G_1(1)\sqcup  {\mathscr{C}}_\mathbb{Z}\sqcup G_0$ and extent the semigroup operation from ${\mathscr{C}}_\mathbb{Z}$, $G_1(1)$ and $G_0$ in the following way:
\begin{align*}
  (n)^1\cdot(i,j)&=(-n+i,j)\in\mathscr{C}_\mathbb{Z}; \\
  (i,j)\cdot(n)^1&=(i,j+n)\in\mathscr{C}_\mathbb{Z}; \\
  (n)^1\cdot(m)^0&=(n+m)^0\in G_0; \\
  (m)^0\cdot(n)^1&=(m+n)^0\in G_0; \\
  (m)^0\cdot(i,j)&=(m+j-i)^0\in G_0; \\
  (i,j)\cdot(m)^0&=(m+j-i)^0\in G_0,
\end{align*}
for any $(n)^1\in G_1(1)$, $(i,j)\in\mathscr{C}_\mathbb{Z}$, and $(m)^0\in G_0$.

The topology $\tau$ on $S$ is determined in the following way:
\begin{enumerate}
  \item all elements $(i,j)$ of the extended bicyclic semigroup ${\mathscr{C}}_\mathbb{Z}$ are isolated in $(S,\tau)$;
  \item the family $\mathscr{B}((n)^1)=\left\{U_p((n)^1)\colon p\in\mathbb{N}\right\}$, where $U_p((n)^1)=\left\{(n)^1\right\}\cup\left\{(-q,-q+n)\in\mathscr{C}_\mathbb{Z}\colon q\geqslant p\right\}$, is a base of the topology $\tau$ at the point $(n)^1\in G_1(1)$;
  \item the family $\mathscr{B}((m)^0)=\left\{V_p((m)^0)\colon p\in\mathbb{N}\right\}$, where
      \begin{equation*}
      V_p((m)^0)=
      \left\{
        \begin{array}{ll}
          \left\{(m)^0\right\}\cup\left\{(q,q+m)\in\mathscr{C}_\mathbb{Z}\colon q\geqslant p\right\}, & \hbox{if~} m\geqslant 0;\\
          \left\{(m)^0\right\}\cup\left\{(q-m,q)\in\mathscr{C}_\mathbb{Z}\colon q\geqslant p\right\}, & \hbox{if~} m\leqslant 0,
        \end{array}
      \right.
      \end{equation*}
      is a base of the topology $\tau$ at the point $(m)^0\in G_0$.
\end{enumerate}

Then $(S,\tau)$ is a Hausdorff locally compact topological inverse semigroup \cite{Fihel-Gutik=2011}.
\end{example}

\begin{example}\label{example-4.2}
Let $G_1(1)$ and $G_0$ be the groups which is defined in Example~\ref{example-4.1}. We denote $G_1^+=\left\{(n)^1\in G_1(1)\colon n\geqslant 0\right\}$ and $G_0^+=\left\{(m)^0\in G_0\colon m\geqslant 0\right\}$. The semigroup operation $S$ implies that the subset $S^+=G_1^+\sqcup {\mathscr{C}}_\mathbb{Z}^+\sqcup G_0^+$ of $S$ with the induced from $S$ multiplication is a subsemigroup of $S$. Let $\tau^+$ be the topology on $S^+$ which is induced from $(S,\tau)$. Then $(S^+,\tau^+)$ is a Hausdorff topological semigroup. Since $(S^+,\tau^+)$ is a closed subspace of $(S,\tau)$, Theorem~3.3.8 of \cite{Engelking=1989} implies that the space $(S^+,\tau^+)$ is locally compact.

We define $S^+_{\mathscr{O}}$ to be the semigroup $S^+$ with the adjoined zero $\mathscr{O}$, i.e., $\mathscr{O}\cdot s=s\cdot \mathscr{O}=\mathscr{O}\cdot\mathscr{O}=\mathscr{O}$ for any $s\in S^+$. We define a topology $\tau^+_{\mathscr{O}}$ on the semigroup $S^+_{\mathscr{O}}$ as follows. We extent the topology $\tau^+$ onto the semigroup $S^+_{\mathscr{O}}$ in the following way: the family $\mathscr{B}^+(\mathscr{O})=\left\{W_p(\mathscr{O})\colon p\in\mathbb{N}\right\}$, where
\begin{equation*}
  W_p(\mathscr{O})=\left\{\mathscr{O}\right\}\sqcup\left\{(n)^1\in G_1(1)\colon n\geqslant p\right\}\sqcup\left\{(i,j)\in{\mathscr{C}}_\mathbb{Z}^+\colon j-i\geqslant p\right\} \sqcup\left\{(m)^0\in G_0\colon m\geqslant p\right\},
\end{equation*}
is the base of the topology $\tau^+_{\mathscr{O}}$ at the point $\mathscr{O}$. It is obvious that $\tau^+_{\mathscr{O}}$ is a compact Hausdorff topology on $S^+_{\mathscr{O}}$. Since the semigroup $S^+_{\mathscr{O}}$ is countable, Theorems~3.1.21 and~4.2.8 from \cite{Engelking=1989} imply that the space $(S^+_{\mathscr{O}},\tau^+_{\mathscr{O}})$ is metrizable.
\end{example}

\begin{proposition}\label{proposition-4.3}
$(S^+_{\mathscr{O}},\tau^+_{\mathscr{O}})$ is a topological semigroup.
\end{proposition}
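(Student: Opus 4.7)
My plan is to exploit the fact, already established in Example~\ref{example-4.2}, that $(S^+,\tau^+)$ is a Hausdorff topological semigroup. Consequently the multiplication on $(S^+_{\mathscr{O}},\tau^+_{\mathscr{O}})$ is automatically jointly continuous at every pair $(x,y)$ with both coordinates in $S^+$, and only joint continuity at pairs of the form $(\mathscr{O},y)$, $(x,\mathscr{O})$, and $(\mathscr{O},\mathscr{O})$ still has to be verified; in every such case the product equals $\mathscr{O}$, so one has to show that an arbitrary basic neighbourhood $W_p(\mathscr{O})$ from $\mathscr{B}^+(\mathscr{O})$ is eventually hit.

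The main device will be the weight function $w\colon S^+\to\omega$ defined by $w((n)^1)=n$, $w((i,j))=j-i$, and $w((m)^0)=m$. A direct inspection of the families of products described in Example~\ref{example-4.1} (unfolding the three cases $j_1<i_2$, $j_1=i_2$, $j_1>i_2$ of the extended bicyclic rule in the case $s,t\in\mathscr{C}_\mathbb{Z}^+$) gives the crucial identity $w(s\cdot t)=w(s)+w(t)$ for all $s,t\in S^+$. Since $W_p(\mathscr{O})$ is precisely $\{\mathscr{O}\}\cup\{s\in S^+\colon w(s)\geqslant p\}$, this additivity immediately yields $W_p(\mathscr{O})\cdot W_p(\mathscr{O})\subseteq W_{2p}(\mathscr{O})\subseteq W_p(\mathscr{O})$, and continuity at $(\mathscr{O},\mathscr{O})$ follows.

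For continuity at a pair $(\mathscr{O},s)$ with $s\in S^+$ (the case $(s,\mathscr{O})$ being dealt with symmetrically), I would split by the type of $s$. If $s=(i,j)\in\mathscr{C}_\mathbb{Z}^+$, then $s$ is isolated in $(S^+,\tau^+)$ and the singleton $\{s\}$ is a neighbourhood, so the additivity of $w$ yields $W_p(\mathscr{O})\cdot\{s\}\subseteq W_{p+w(s)}(\mathscr{O})\subseteq W_p(\mathscr{O})$. If $s\in G_1^+$ or $s\in G_0^+$, the basic neighbourhoods $U_r(s)$, respectively $V_r(s)$, recalled in Example~\ref{example-4.1} have the pleasant property that every element in them has weight exactly $w(s)$; hence the same additivity gives $W_p(\mathscr{O})\cdot U_r(s)\subseteq W_{p+w(s)}(\mathscr{O})\subseteq W_p(\mathscr{O})$ (and similarly with $V_r(s)$), which is the required inclusion.

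The main obstacle I foresee is purely bookkeeping: establishing $w(s\cdot t)=w(s)+w(t)$ in all cases requires expanding the nine families of products between $G_1^+$, $\mathscr{C}_\mathbb{Z}^+$, and $G_0^+$ and simultaneously checking at each step that the result actually lies in $S^+$ rather than in $S\setminus S^+$. The defining inequalities $n\geqslant 0$ on $G_1^+$ and $m\geqslant 0$ on $G_0^+$ are precisely what guarantee this closure; once this routine case analysis has been completed, the continuity arguments above finish the proof without any further idea.
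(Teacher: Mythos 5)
Your proposal is correct and follows essentially the same route as the paper: reduce to joint continuity at the pairs involving $\mathscr{O}$ (using that $S^+$ is an open subsemigroup on which the operation is already jointly continuous) and then verify that products of basic neighbourhoods land in the sets $W_q(\mathscr{O})$. Your additive weight $w$ simply packages the paper's seven explicit inclusions, such as $U_{p_1}((n)^1)\cdot W_{p_2}(\mathscr{O})\subseteq W_{n+p_2}(\mathscr{O})$ and $W_{p_1}(\mathscr{O})\cdot W_{p_2}(\mathscr{O})\subseteq W_{p_1+p_2}(\mathscr{O})$, into the single identity $w(s\cdot t)=w(s)+w(t)$.
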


\begin{proof}
$(S^+,\tau^+)$ is a Hausdorff topological semigroup and $(S^+,\tau^+)$ is an open subspace of the topological space $(S^+_{\mathscr{O}},\tau^+_{\mathscr{O}})$, it is complete to show that the semigroup operation is continuous on $(S^+_{\mathscr{O}},\tau^+_{\mathscr{O}})$ in the following seven cases.
\begin{enumerate}
  \item In the case $(n)^1\cdot \mathscr{O}$, where $(n)^1\in G_1(1)$, we have that $U_{p_1}((n)^1)\cdot W_{p_2}(\mathscr{O})\subseteq W_{n+p_2}(\mathscr{O})$ for any non-negative integer $n$ and any positive integers $p_1$ and $p_2$.
  \item In the case $ \mathscr{O}\cdot(n)^1$, where $(n)^1\in G_1(1)$, we obtain that $W_{p_1}(\mathscr{O})\cdot U_{p_2}((n)^1)\subseteq W_{p_1+n}(\mathscr{O})$ for any non-negative integer $n$ and any positive integers $p_1$ and $p_2$.
  \item In the case $(i,j)\cdot \mathscr{O}$, where $(i,j)\in {\mathscr{C}}_\mathbb{Z}^+$, we get that $(i,j)\cdot W_{p}(\mathscr{O})\subseteq W_{p+j-i}(\mathscr{O})$ for any positive integer $p$.
  \item In the case $\mathscr{O}\cdot(i,j)$, where $(i,j)\in {\mathscr{C}}_\mathbb{Z}^+$, we have that $W_{p}(\mathscr{O})\cdot(i,j)\subseteq W_{p+j-i}(\mathscr{O})$ for any positive integer $p$.
  \item In the case $(m)^0\cdot \mathscr{O}$, where $(m)^0\in G_0$, we obtain that $V_{p_1}((m)^0)\cdot W_{p_2}(\mathscr{O})\subseteq W_{m+p_2}(\mathscr{O})$ for any non-negative integer $m$ and any positive integers $p_1$ and $p_2$.
  \item In the case $\mathscr{O}\cdot (m)^0$, where $(m)^0\in G_0$, we get that $W_{p_1}(\mathscr{O})\cdot V_{p_2}((m)^0)\subseteq W_{p_1+m}(\mathscr{O})$ for any non-negative integer $m$ and any positive integers $p_1$ and $p_2$.
  \item In the case $\mathscr{O}\cdot\mathscr{O}$ we have that $W_{p_1}(\mathscr{O})\cdot W_{p_2}(\mathscr{O})\subseteq W_{p_1+p_2}(\mathscr{O})$ for any positive integers $p_1$ and $p_2$.
\end{enumerate}

The above arguments imply that the semigroup operation is continuous on $(S^+_{\mathscr{O}},\tau^+_{\mathscr{O}})$.
\end{proof}

The definition of the topology $\tau^+_{\mathscr{O}}$ implies that $I_0=G_0^+\sqcup \{\mathscr{O}\}$ is a compact ideal of the Hausdorff compact topological semigroup $(S^+_{\mathscr{O}},\tau^+_{\mathscr{O}})$. By the Lawson-Madison theorem (see \cite{Lawson-Madison=1971} or \cite[Theorem~1.57]{Carruth-Hildebrant-Koch=1983}) the quotient semigroup $S^+_{\mathscr{O}}/{I_0}$ with the quotient topology $\tau_{\mathbf{q}}$ is a Hausdorff compact topological semigroup. Since ${\mathscr{C}}_\mathbb{Z}^+$ is a dense subsemigroup of  $(S^+_{\mathscr{O}},\tau^+_{\mathscr{O}})$ and ${\mathscr{C}}_\mathbb{Z}^+\cap I_0=\varnothing$ we get the following corollary.

\begin{corollary}\label{corollary-4.4}
The semigroup ${\mathscr{C}}_\mathbb{Z}^+$ is a dense subsemigroup of the Hausdorff compact topological semigroup  $(S^+_{\mathscr{O}}/{I_0},\tau_{\mathbf{q}})$.
\end{corollary}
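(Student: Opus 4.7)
The plan is to assemble three facts about the compact Hausdorff topological semigroup $(S^+_{\mathscr{O}},\tau^+_{\mathscr{O}})$ constructed in Example~\ref{example-4.2} and then apply the Lawson--Madison quotient construction.

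First, I would verify that $\mathscr{C}_\mathbb{Z}^+$ is dense in $(S^+_{\mathscr{O}},\tau^+_{\mathscr{O}})$. The complement $S^+_{\mathscr{O}}\setminus \mathscr{C}_\mathbb{Z}^+$ equals $G_1^+\cup G_0^+\cup\{\mathscr{O}\}$, and a direct inspection of the basic neighbourhoods $U_p((n)^1)$, $V_p((m)^0)$ and $W_p(\mathscr{O})$ introduced in Examples~\ref{example-4.1}--\ref{example-4.2} shows that each one contains infinitely many elements $(i,j)\in\mathscr{C}_\mathbb{Z}^+$ (with $i\leqslant j$). Since points of $\mathscr{C}_\mathbb{Z}^+$ are isolated and every basic neighbourhood of a non-$\mathscr{C}_\mathbb{Z}^+$ point meets $\mathscr{C}_\mathbb{Z}^+$, every nonempty open set of $(S^+_{\mathscr{O}},\tau^+_{\mathscr{O}})$ meets $\mathscr{C}_\mathbb{Z}^+$.

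Second, I would show that $I_0=G_0^+\sqcup\{\mathscr{O}\}$ is a compact ideal of $(S^+_{\mathscr{O}},\tau^+_{\mathscr{O}})$. That it is a two-sided ideal follows from the multiplication rules of Example~\ref{example-4.1} (every product involving a factor from $G_0^+$ lies in $G_0^+$) together with $\mathscr{O}$ being an adjoined zero. Closedness of $I_0$ is immediate: every point of $G_1^+\cup\mathscr{C}_\mathbb{Z}^+$ has a basic neighbourhood disjoint from $I_0$, and closed subsets of the compact space $S^+_{\mathscr{O}}$ are compact. The Lawson--Madison theorem, cited in the excerpt as \cite[Theorem~1.57]{Carruth-Hildebrant-Koch=1983}, now gives that $(S^+_{\mathscr{O}}/I_0,\tau_{\mathbf{q}})$ is a Hausdorff compact topological semigroup.

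Third, I would observe that the quotient map $q\colon S^+_{\mathscr{O}}\to S^+_{\mathscr{O}}/I_0$ is a continuous semigroup homomorphism that identifies only the points of $I_0$ with the single class $[I_0]$. Because $\mathscr{C}_\mathbb{Z}^+\cap I_0=\varnothing$, the restriction $q|_{\mathscr{C}_\mathbb{Z}^+}$ is an injective homomorphism, so $\mathscr{C}_\mathbb{Z}^+$ can be identified with the subsemigroup $q(\mathscr{C}_\mathbb{Z}^+)$ of $S^+_{\mathscr{O}}/I_0$. Density of $q(\mathscr{C}_\mathbb{Z}^+)$ is preserved by the continuous surjection $q$: any nonempty $\tau_{\mathbf{q}}$-open $V$ has nonempty open preimage $q^{-1}(V)$ in $S^+_{\mathscr{O}}$, which meets the dense set $\mathscr{C}_\mathbb{Z}^+$ by the first step. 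The only step requiring actual work is the density check of the first paragraph; the rest is routine packaging of the Lawson--Madison theorem with standard facts about quotient maps.
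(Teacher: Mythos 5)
Your argument is correct and follows essentially the same route as the paper: the paper's justification is exactly the paragraph preceding the corollary, which notes that $I_0$ is a compact ideal, invokes the Lawson--Madison theorem for the Hausdorff compact quotient, and concludes from the density of $\mathscr{C}_\mathbb{Z}^+$ in $(S^+_{\mathscr{O}},\tau^+_{\mathscr{O}})$ together with $\mathscr{C}_\mathbb{Z}^+\cap I_0=\varnothing$. You merely spell out the density check on the basic neighbourhoods and the preservation of density under the quotient map, which the paper leaves implicit.
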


Since the closure of a subsemigroup $S$ of a Hausdorff compact topological semigroup $K$ is a compact  topological semigroup, Proposition~\ref{proposition-4.3} implies the following two corollaries.

\begin{corollary}\label{corollary-4.5}
Every subsemigroup of  semigroup ${\mathscr{C}}_\mathbb{Z}^+$ is a dense subsemigroup of a Hausdorff compact topological semigroup.
\end{corollary}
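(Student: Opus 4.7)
The plan is to invoke Corollary~\ref{corollary-4.4} directly and then take closures. Let $T$ be an arbitrary subsemigroup of ${\mathscr{C}}_\mathbb{Z}^+$. By Corollary~\ref{corollary-4.4}, ${\mathscr{C}}_\mathbb{Z}^+$ sits as a (dense) subsemigroup of the Hausdorff compact topological semigroup $(S^+_{\mathscr{O}}/{I_0},\tau_{\mathbf{q}})$, and therefore so does $T$. I would then form the closure $\overline{T}$ of $T$ inside $(S^+_{\mathscr{O}}/{I_0},\tau_{\mathbf{q}})$ and show that this closure is the compact topological semigroup we are after, with $T$ dense in it by construction.

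First I would observe that $\overline{T}$, being a closed subset of the compact Hausdorff space $S^+_{\mathscr{O}}/{I_0}$, is itself compact Hausdorff. Second, I would appeal to the standard fact (recalled in the sentence immediately preceding the corollary) that the closure of a subsemigroup in a Hausdorff topological semigroup is again a subsemigroup; the one-line justification, if needed, is that for $x,y\in\overline{T}$ continuity of multiplication together with separate continuity applied to nets from $T$ produces $xy\in\overline{T}$. Combining these two facts, $\overline{T}$ with the subspace topology is a Hausdorff compact topological semigroup containing $T$ as a dense subsemigroup.

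There is essentially no obstacle here: the work has already been done in Proposition~\ref{proposition-4.3} and Corollary~\ref{corollary-4.4}, which supply an ambient compact topological semigroup containing ${\mathscr{C}}_\mathbb{Z}^+$, and the passage from ${\mathscr{C}}_\mathbb{Z}^+$ to an arbitrary subsemigroup $T$ only requires replacing the ambient compact semigroup by the closure $\overline{T}$. The only point that warrants an explicit mention is that one must use the closure inside $S^+_{\mathscr{O}}/{I_0}$ rather than inside ${\mathscr{C}}_\mathbb{Z}^+$ itself, since ${\mathscr{C}}_\mathbb{Z}^+$ is not compact; passing to the quotient semigroup ensures both compactness and that $T$ (which is disjoint from the collapsed ideal $I_0$) remains embedded unchanged.
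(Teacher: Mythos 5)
Your argument is correct and is essentially the paper's own: the paper likewise obtains Corollary~\ref{corollary-4.5} by placing the subsemigroup inside the ambient compact Hausdorff topological semigroup furnished by Proposition~\ref{proposition-4.3} and Corollary~\ref{corollary-4.4} and taking its closure there, using the standard fact that the closure of a subsemigroup of a compact Hausdorff topological semigroup is again a compact topological semigroup. Whether one closes up inside $(S^+_{\mathscr{O}},\tau^+_{\mathscr{O}})$ or inside the quotient $(S^+_{\mathscr{O}}/I_0,\tau_{\mathbf{q}})$ is immaterial, so nothing further is needed.
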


\begin{corollary}\label{corollary-4.6}
On the semigroup $\mathscr{C}_{+}(a,b)$ with adjoined zero admits a Hausdorff compact semigroup topology.
\end{corollary}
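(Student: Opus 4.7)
The plan is to exhibit $\mathscr{C}_{+}(a,b)$ with an adjoined zero as a closed subsemigroup of the Hausdorff compact topological semigroup $(S^+_{\mathscr{O}}/I_0,\tau_{\mathbf{q}})$ already produced in Example~\ref{example-4.2} and Corollary~\ref{corollary-4.4}. First, I invoke Lemma~\ref{lemma-3.3} with $k=0$ to identify $\mathscr{C}_{+}(a,b)$ with the subsemigroup $\mathscr{C}^{+}_{\mathbb{Z}}[0]=\{(s,t)\in\mathscr{C}^{+}_{\mathbb{Z}}\colon s\geqslant 0\}$ of $\mathscr{C}^{+}_{\mathbb{Z}}$ via $\mathfrak{h}_0\colon b^ia^j\mapsto(i,j)$, and via the chain of embeddings $\mathscr{C}^{+}_{\mathbb{Z}}[0]\hookrightarrow\mathscr{C}^{+}_{\mathbb{Z}}\hookrightarrow S^+\hookrightarrow S^+_{\mathscr{O}}\to S^+_{\mathscr{O}}/I_0$ I regard $\mathscr{C}_{+}(a,b)$ as a subsemigroup of the compact Hausdorff topological semigroup $(S^+_{\mathscr{O}}/I_0,\tau_{\mathbf{q}})$.

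The key step is to compute the closure $K$ of $\mathscr{C}^{+}_{\mathbb{Z}}[0]$ in this quotient and show that $K = \mathscr{C}^{+}_{\mathbb{Z}}[0]\cup\{\mathbf{0}\}$, where $\mathbf{0}=q(I_0)$ is the zero of $S^+_{\mathscr{O}}/I_0$. I first compute $\overline{\mathscr{C}^{+}_{\mathbb{Z}}[0]}$ inside $(S^+_{\mathscr{O}},\tau^+_{\mathscr{O}})$ by inspecting the basic neighborhoods given in Example~\ref{example-4.1}. Every basic neighborhood $U_p((n)^1)$ of a point $(n)^1\in G_1^+$ meets $\mathscr{C}_{\mathbb{Z}}^+$ only in elements of the form $(-q,-q+n)$ with $q\geqslant p\geqslant 1$; these have negative first coordinate, so no point of $G_1^+$ is a limit of $\mathscr{C}^{+}_{\mathbb{Z}}[0]$. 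In contrast, $V_p((m)^0)$ contains elements $(q,q+m)\in\mathscr{C}^{+}_{\mathbb{Z}}[0]$ for all large $q$, and each $W_p(\mathscr{O})$ contains elements of $\mathscr{C}^{+}_{\mathbb{Z}}[0]$ with arbitrarily large $j-i$. Hence $\overline{\mathscr{C}^{+}_{\mathbb{Z}}[0]}=\mathscr{C}^{+}_{\mathbb{Z}}[0]\sqcup G_0^+\sqcup\{\mathscr{O}\}$ in $S^+_{\mathscr{O}}$.

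Pushing through the quotient map $q\colon S^+_{\mathscr{O}}\to S^+_{\mathscr{O}}/I_0$, which is closed because $S^+_{\mathscr{O}}$ is compact Hausdorff, I get that $q(\overline{\mathscr{C}^{+}_{\mathbb{Z}}[0]})$ is the closure of $\mathscr{C}^{+}_{\mathbb{Z}}[0]$ in the quotient. Since $q$ collapses $I_0=G_0^+\sqcup\{\mathscr{O}\}$ to the single point $\mathbf{0}$ and is injective on the complement, this closure is $K=\mathscr{C}^{+}_{\mathbb{Z}}[0]\cup\{\mathbf{0}\}$. Because $\mathscr{O}$ is a zero of $S^+_{\mathscr{O}}$ and $q$ is a semigroup homomorphism, $\mathbf{0}=q(\mathscr{O})$ is a zero of $S^+_{\mathscr{O}}/I_0$ and hence of every subsemigroup containing it; in particular, $K$ is algebraically isomorphic to $\mathscr{C}_{+}(a,b)$ with adjoined zero via the extension of $\mathfrak{h}_0$ that sends the adjoined zero to $\mathbf{0}$. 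As a closed subspace of a Hausdorff compact topological semigroup, $K$ carries a Hausdorff compact semigroup topology, which pulls back to the desired topology on $\mathscr{C}_{+}(a,b)\cup\{0\}$.

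The main obstacle I expect is the bookkeeping in the second paragraph: verifying precisely which points of $G_1^+$, $G_0^+$, and $\{\mathscr{O}\}$ lie in $\overline{\mathscr{C}^{+}_{\mathbb{Z}}[0]}$. Everything else (closedness of the quotient map, $\mathbf{0}$ being a zero, and closed subsemigroups of compact topological semigroups being compact topological semigroups) is routine once the closure is identified.
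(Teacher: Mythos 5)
Your proof is correct and follows the same route the paper intends: Corollary~\ref{corollary-4.6} is stated there as an immediate consequence of Proposition~\ref{proposition-4.3} via the principle that the closure of a subsemigroup of a compact Hausdorff topological semigroup is again a compact topological semigroup, applied to $\mathscr{C}_{+}(a,b)\cong\mathscr{C}^{+}_{\mathbb{Z}}[0]$ inside $(S^+_{\mathscr{O}}/I_0,\tau_{\mathbf{q}})$. You have simply made explicit the closure computation (that the only limit points of $\mathscr{C}^{+}_{\mathbb{Z}}[0]$ lie in $G_0^+\sqcup\{\mathscr{O}\}=I_0$, which the quotient collapses to a single zero), a verification the paper leaves to the reader.
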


\begin{remark}
\begin{enumerate}
  \item Similar statements to Proposition~\ref{proposition-4.3} and Corollaries~\ref{corollary-4.4} and \ref{corollary-4.5} hold for the semigroup ${\mathscr{C}}_\mathbb{Z}^-$. Also, the statement of Corollary~\ref{corollary-4.6} holds for the semigroup $\mathscr{C}_{-}(a,b)$.
  \item In \cite{Gutik=2015} it is proved that a Hausdorff locally compact semitopological bicyclic semigroup with adjoined zero $\mathscr{C}^0$ is either compact or discrete. Similar statement holds for semitopological interassociates of the bicyclic monoid \cite{Gutik-Maksymyk=2016}. But the extended bicyclic semigroup  with adjoined zero ${\mathscr{C}}_\mathbb{Z}^0$ admits continuum many shift-continuous locally compact topologies \cite{Gutik-Maksymyk=2019} and similar statements hold for the semigroups $\mathscr{C}_{+}(a,b)$ and $\mathscr{C}_{-}(a,b)$ \cite{Gutik=2024}.
\end{enumerate}
\end{remark}
%%%%%%%%%%%%%%%%%%%%%%%%%%%%%%%%%%%%%%%%%%%%%%%%%%%%%
\section*{\textbf{Acknowledgements}}

The author acknowledges Alex Ravsky for his comments and suggestions.


\begin{thebibliography}{W}

\bibitem{Anderson-Hunter-Koch=1965}
L.~W.~Anderson, R.~P.~Hunter, and R.~J.~Koch,
\emph{Some results on stability in semigroups},
Trans. Amer. Math. Soc. {\bf 117} (1965), 521--529.
DOI: 10.2307/1994222


\bibitem{Banakh-Dimitrova-Gutik=2009}
T.~Banakh, S.~Dimitrova, and O.~Gutik,
\emph{The Rees-Suschkiewitsch Theorem for simple topological semigroups},
Mat. Stud. \textbf{31}  (2009), no. 2, 211--218.

\bibitem{Banakh-Dimitrova-Gutik=2010}
T.~Banakh, S.~Dimitrova, and O.~Gutik,
\emph{Embedding the bicyclic semigroup into countably compact topological semigroups},
Topology Appl. \textbf{157} (2010), no.~18, 2803--2814.
DOI: 10.1016/j.topol.2010.08.020

\bibitem{Bardyla-Ravsky=2020}
S. Bardyla and A. Ravsky,
\emph{Closed subsets of compact-like topological spaces},
Appl. Gen. Topol. \textbf{21} (2020), no. 2, 201--214.
DOI: 10.4995/agt.2020.12258.



\bibitem{Belyaev=1995}
V. V. Belyaev,
\emph{Topologization of countable locally finite groups},
Algebra Logic \textbf{34}  (1995), no. 6, 339--342.

\bibitem{Bertman-West=1976}
M.~O.~Bertman and T.~T.~West,
{\it Conditionally compact bicyclic semitopological semigroups},
Proc. Roy. Irish Acad. {\bf A76} (1976), no. 21--23, 219--226.

\bibitem{Carruth-Hildebrant-Koch=1983}
J.~H.~Carruth, J.~A.~Hildebrant, and  R.~J.~Koch,
\emph{The theory of topological semigroups}, Vol. I, Marcel
Dekker, Inc., New York and Basel, 1983.

\bibitem{Chornenka-Gutik=2023}
A. Chornenka and O.~Gutik,
\emph{On topologization of the bicyclic monoid},
Visnyk L'viv Univ., Ser. Mech.-Math. \textbf{95} (2023), 45--56.
DOI: 10.30970/vmm.2023.95.046-056

\bibitem{Clifford-Preston=1961}
A.~H.~Clifford and G.~B.~Preston,
\emph{The algebraic theory of semigroups}, Vol. I,
Amer. Math. Soc. Surveys {\bf 7}, Providence, R.I.,  1961.

\bibitem{HBSTT-1984}
W.~W.~Comfort, \emph{Topological groups},
Handbook of set-theoretic
topology, Kunen K., Vaughan J. (eds.) Elsevier, 1984, pp.
1143--1263.



\bibitem{Eberhart-Selden=1969}
C. Eberhart and J. Selden,
\emph{ On the closure of the bicyclic semigroup},
Trans. Amer. Math. Soc. {\bf 144} (1969), 115--126.
DOI: 10.1090/S0002-9947-1969-0252547-6

\bibitem{Engelking=1989}
R.~Engelking,
\emph{General topology}, 2nd ed., Heldermann, Berlin, 1989.




\bibitem{Fihel-Gutik=2011}
I. R. Fihel and O. V. Gutik,
\emph{On the closure of the extended bicyclic semigroup},
Carpathian Math. Publ. \textbf{3} (2011), no. 2, 131--157.

\bibitem{Gutik=2015}
O. Gutik,
\emph{On the dichotomy of a locally compact semitopological bicyclic monoid with adjoined zero},
Visn. L’viv. Univ., Ser. Mekh.-Mat. \textbf{80} (2015), 33--41.

\bibitem{Gutik=2016}
O. Gutik,
\emph{Topological property of the Taimanov semigroup},
Math. Bull. T. Shevchenko Sci. Soc. \textbf{13} (2016), 29--34.

\bibitem{Gutik=2023}
O. Gutik,
\emph{On non-topologizable semigroups},
Visn. L’viv. Univ., Ser. Mekh.-Mat. \textbf{96} (2024), 25--36. \\
DOI: 10.30970/vmm.2024.96.025-036

\bibitem{Gutik=2024}
O. V. Gutik,
\emph{On locally compact shift-continuous topologies on semigroups $\mathscr{C}_{+}(a,b)$ and $\mathscr{C}_{-}(a,b)$ with adjoint zero},
Bukovyn. Mat. Zh. \textbf{12} (2024), no. 1, 14--20.
DOI: 10.31861/bmj2024.01.02

\bibitem{Gutik-Maksymyk=2016}
O. Gutik and K. Maksymyk,
\emph{On semitopological interassociates of the bicyclic monoid},
Visn. L’viv. Univ., Ser. Mekh.-Mat. \textbf{82} (2016), 98--108.

\bibitem{Gutik-Maksymyk=2019}
O. V. Gutik and K. V. Maksymyk,
\emph{On a semitopological extended bicyclic semigroup with adjoined zero},
Mat. Metody Fiz.-Mekh. Polya \textbf{62} (2019), no. 4, 29--38; \textbf{reprinted version:}
J. Math. Sci. \textbf{265} (2022), no. 3, 369--381.
DOI: 10.1007/s10958-022-06058-6

\bibitem{Gutik-Repovs=2007}
O.~Gutik and D.~Repov\v{s},
\emph{On countably compact $0$-simple topological inverse semigroups},
Semigroup Forum \textbf{75} (2007), no.~2, 464--469.
DOI: 10.1007/s00233-007-0706-x

\bibitem{Hewitt-Roos=1963}
E.~Hewitt and K.~A.~Roos,
\emph{Abstract harmonic analysis}, Vol.~\textbf{1},
Springer, Berlin, 1963.

\bibitem{Hildebrant-Koch=1986}
J.~A.~Hildebrant and R.~J.~Koch,
{\it Swelling actions of $\Gamma$-compact semigroups},
Semigroup Forum {\bf 33} (1986), 65--85. %\\
DOI: 10.1007/BF02573183

\bibitem{Klyachko-Olshanskii-Osin=2013}
A. A. Klyachko, A. Yu. Olshanskii, and D. V. Osin,
\emph{On topologizable and non-topologizable groups},
Topology Appl. \textbf{160} (2013), no. 16, 2104--2120.

\bibitem{Koch-Wallace=1957}
R.~J.~Koch and A.~D.~Wallace,
\emph{Stability in semigroups},
Duke Math. J. \textbf{24} (1957), no. 2, 193--195. \\
DOI: 10.1215/S0012-7094-57-02425-0

\bibitem{Lawson-Madison=1971}
J. Lawson and  B.  Madison,
\emph{On congruences and cones},
Math. Z. \textbf{120} (1971), 18--24.
DOI: 10.1007/BF01109714

\bibitem{Makanjuola-Umar=1997}
S. O. Makanjuola and A. Umar,
\emph{On a certain sub semigroup of the bicyclic semigroup},
Communications in Algebra, \textbf{25} (1997), no. 2, 509-519.
DOI: 10.1080/00927879708825870

\bibitem{Markov=1945}
A.~A. Markov, \emph{On free topological groups}, Izvestia Akad. Nauk
SSSR \textbf{9} (1945), 3--64 (in Russian); \textbf{English version in:}
Transl. Amer. Math. Soc. \textbf{8} (1962), no. 1, 195--272.



%\bibitem{Montgomery-Zippin=1955}
%D. Montgomery and L. Zippin,
%\emph{Topological transformation groups},
%Interscience Publ., New York-London, 1955. 282p.

\bibitem{Olshansky=1980}
A.~Yu. Ol'shanskiy, \emph{Remark on counatable non-topologized
groups}, Vestnik Moscow Univ. Ser. Mech. Math. \textbf{39} (1980),
1034 (in Russian).

\bibitem{Ruppert=1984}
W.~Ruppert,
\emph{Compact semitopological semigroups: an intrinsic theory},
Lect. Notes Math., \textbf{1079}, Springer, Berlin, 1984.
DOI: 10.1007/BFb0073675

\bibitem{Taimanov=1973}
A. D. Taimanov,
\emph{An example of a semigroup which admits only the discrete topology},
%Algebra i Logika \textbf{12} (1973), no. 1, 114--116 (in Russian); \textbf{English version in:}
Algebra and Logic \textbf{12} (1973), no. 1, 64--65. %\\
DOI: 10.1007/BF02218642

\bibitem{Taimanov=1975}
A.~D.~Taimanov,
{\it On the topologization of commutative semigroups,}
%Mat. Zametki \textbf{17} (1975), no. 5, 745--748 (in Russian); \textbf{English version in:}
Math. Notes \textbf{17} (1975), no. 5, 443--444. \\
DOI: 10.1007/BF01155800

\bibitem{Warne=1968}
R. J. Warne,
\emph{$I$-bisimple semigroups},
Trans. Amer. Math. Soc. \textbf{130}, (1968) no. 3, 367--386. \\
DOI: 10.1090/S0002-9947-1968-0223476-8

\bibitem{Zelenyuk=2000}
Ye. Zelenyuk,
\emph{On topologies on groups with continuous shifts and inversion},
Visnyk Kyiv Univ., Ser. Fiz.-Mat. \textbf{2} (2000), 252--256 (in Ukrainian)

\end{thebibliography}
\end{document}